\algrenewcommand{\algorithmiccomment}[1]{\hfill[#1]}
\title{On solving symmetric systems of linear
  equations in an unnormalized Krylov subspace
  framework\thanks{\footsupersede}} 
\author{Anders FORSGREN\thanks{\footTO} \and Tove
  ODLAND\addtocounter{footnote}{-1}\footnotemark}
\def\footTO{Optimization and Systems Theory, Department of
  Mathematics, KTH Royal Institute of Technology, SE-100 44 Stockholm,
  Sweden ({\tt andersf@kth.se,odland@kth.se}).}
\def\footsupersede{This manuscript
  supersedes Report TRITA-MAT-2014-OS-01 {\em ``A general Krylov
    method for solving symmetric systems of linear equations''},
  Department of Mathematics, KTH Royal Institute of Technology, March 2014.}
\date {September 17, 2014}
\begin{document}
\maketitle\thispagestyle{empty}

\begin{abstract}
  In an unnormalized Krylov subspace framework for solving symmetric
  systems of linear equations, the orthogonal vectors
  that are generated by a Lanczos process are not necessarily on the
  form of gradients. Associating each orthogonal
  vector with a triple, and using only the three-term recurrences of
  the triples, we give conditions on whether a
  symmetric system of linear equations is compatible or incompatible.
  In the compatible case, a
  solution is given and in the incompatible case, a certificate of
  incompatibility is obtained. In particular, the case when the matrix is
  singular is handled. 

We also derive a minimum-residual method based on this
  framework and show how the iterates may be updated explicitly based
  on the triples, and in the incompatible case a minimum-residual solution of minimum
  Euclidean norm is obtained.

\smallskip

\noindent{\bf Keywords:} Krylov subspace method, symmetric system of
linear equations, unnormalized Lanczos vectors, minimum-residual method

\smallskip
\noindent{\bf MCS number:} 65F10
\end{abstract}

\section{Introduction}

An important problem in numerical linear algebra and optimization is to solve a system
of equations where the matrix is symmetric. Such a problem may be posed as
\begin{equation}\label{lineq}
Hx+c=0,
\end{equation}
for $x \in \mathbb{R}^n$, with $c \in \mathbb{R}^n$ and $H=H^T \in
\mathbb{R}^{n\times n}$. Note that with $A=H$ and $b=-c$,
\eqref{lineq} becomes $Ax=b$. However, we prefer the notation of \eqref{lineq} as it is on the
form of a gradient $g$, defined as $g=Hx+c$, being equal to zero. This
notation highlights that we are trying to find a
non-trivial linear combination of the columns of $H$ and $c$. Our primary
motivation comes from optimization where in many cases the systems of linear equations
that need to be solved are such that the matrix $H$ is symmetric but in general
indefinite. For example, KKT systems have this
form, see, e.g., \cite{FGS96}, but there are many other
applications. Throughout, $H$ is assumed symmetric, any other
assumptions on $H$ at particular instances will be stated
explicitly. The key concept in this paper will be to determine if \eqref{lineq} is
compatible or not. Our results include and handle the case when $H$ is a
singular matrix. It is assumed throughout that $c\ne 0$.

Exact arithmetic will be assumed and the theory developed in this paper
is based on that assumption. In the end of the paper we briefly
discuss computational aspects of our results in finite precision.

One strategy for solving \eqref{lineq} is to
generate linearly independent vectors, $q_k$, $k=0, 1, \dots$ until
$q_k$ becomes linearly dependent for some $k=r \leq n$ and hence
$q_r=0$. In
this paper we consider Krylov subspace methods in which the
generated vectors form an orthogonal, hence linearly independent, basis for the Krylov subspaces
generated by $H$ and $c$,
\begin{equation}\label{eqn-Krylov}
\mathcal{K}_0(c, H)=\{0\}, \quad \mathcal{K}_k(c, H)=span\{c, Hc,
H^2c, \dots, H^{k-1}c\}, \quad k=1, 2, \dots.
\end{equation}
The Krylov vectors $c, Hc, \dots, H^{r-1}c$ are linearly independent, but
as they become highly ill-conditioned it is desirable to work with
some other set of vectors.

Orthogonal vectors $q_k$ that are generated by a
Lanczos process will be a linear combination of the
columns of $H$ and $c$. There is a freedom in the scaling of each generated
vector. We will refer to the case when the coefficient
corresponding to $c$ is equal to one as a \emph{normalized} Lanczos
vector, i.e the vector is on the form of a gradient, $g=Hx+c$. 
An \emph{unnormalized} Lanczos vector is then referring to the case
when the coefficient corresponding to $c$ is not required to be one,
i.e. $q=Hy+\delta c$, where $\delta \in\Re$.

The concept of using unnormalized 
Lanczos vectors was introduced by Gutknecht in \cite{Gutknecht90, Gutknecht92} as a
remedy for so called pivot breakdowns that occur when normalization is
not well defined.\footnote{Gutknecht considers the more general case
  when $H$ is non symmetric where there are several
  other possible breakdowns for corresponding Lanczos a process, see, e.g., \cite{saad, gutknechtacta}. For $H$
  symmetric, the pivot breakdown is the only one that can happen.} In subsequent
work by Gutknecht the term \emph{inconsistent} is used, see, 
e.g., \cite{gutknechtacta, Gutknecht00}. However, in this
paper the term unnormalized will be used as it better suits our
purposes. The unnormalized framework will be used in a more general sense, not only
as a remedy for pivot breakdown, to derive our results.

The Lanczos process was first introduced by Lanczos
\cite{lanczos50,lanczos52}. There have been very many contributions to
the theory both for symmetric and non-symmetric systems, 
see, e.g., Golub and O'Leary's extensive survey of the years
1948-1976 \cite{goluboleary}, Golub and Van Loan's book
\cite{golubvanloan} and Gutknecht's survey \cite{gutknechtacta}. 

The outline of the paper is as follows. Section~\ref{sec-background}
gives a review of background material on the unnormalized Krylov
subspace framework. In particular, we review recursions for the
unnormalized Lanczos triples $(q_k, y_k, \delta_k)$ associated with
the unnormalized Lanczos vectors $q_k$, $k=0, \dots, r$, such that
$q_k=Hy_k+ \delta_kc$, $q_k \in \mathcal{K}_{k+1}(c,H)$, $y_k \in
\mathcal{K}_{k}(c,H)$ and $\delta_k \in \mathbb{R}$, $k=0, \dots, r$.

In Section~\ref{sec-prop}, we give our main convergence result, based
on the recursions for the triples, stating that when \eqref{lineq} is
compatible a solution is given (in this case we show that $\delta_r
\neq 0$), or a certificate of incompatibility can be obtained for
\eqref{lineq} (in this case $\delta_r = 0$). The case of a singular
matrix $H$ is included and handled in the analysis, which to the best of our
knowledge has not been done before.  The derivation is summarized in
an unnormalized Krylov algorithm, and in addition some remarks are
made on the case when normalization is well defined and used.

Finally, in Section \ref{sec-conminres}, a minimum-residual method,
applicable also for incompatible systems, is derived by making use of the
unnormalized Krylov framework. Explicit recursions for the
minimum-residual iterates are derived, including an expression for the
solution of minimum Euclidean norm in the incompatible case. 

\subsection{Notation}

The letter $i$, $j$ and $k$ denote integer indices, other lowercase 
letters such as $q$, $y$ and $c$ denote
column vectors, possibly with super- and$/$or subscripts.  For a
symmetric matrix $H$, $H\succ0$ denotes that $H$ is positive
definite. Analogously, $H\succeq0$ is used to denote that $H$ is positive
semidefinite. The null
space and range space of $H$ are denoted by $\mathcal{N}(H)$ and 
$\mathcal{R}(H)$ respectively. 
We will denote by $Z$ an orthonormal matrix whose
columns form a basis of $\mathcal{N}(H)$. If $H$ is nonsingular,
then $Z$ is to be interpreted as an empty matrix. When referring to a
norm, the Euclidean norm is used throughout.

\section{Background}\label{sec-background}

Regarding \eqref{lineq}, the raw data available is the matrix $H$ and
the vector $c$ and combinations of the two, for example represented by
the Krylov subspaces generated by $H$ and $c$, as defined in
\eqref{eqn-Krylov}. For an introduction and background on Krylov
subspaces, see, e.g., Gutknecht~\cite{Gutknecht07} and Saad~\cite{saad}.

Without loss of generality, the scaling of the first vector
$q_0$ may be chosen so that $q_0=c$. Then one sequence of linearly
independent vectors may be generated by letting $q_k \in
\mathcal{K}_{k+1}(c, H)\cap \mathcal{K}_k(c, H)^{\perp}$, $ k=1,
\dots, r$, such that $q_k \neq 0$, for $k=0, 1, \dots, r-1$ and
$q_r=0$ where $r$ is the minimum index $k$ for which $
\mathcal{K}_{k+1}(c, H)= \mathcal{K}_k(c, H)$.  These
vectors $\{q_0, q_1, \dots, q_{r-1}\}$ form an orthogonal, hence
linearly independent, basis of $\mathcal{K}_r(c, H)$. We will refer to
these vectors as the Lanczos vectors. With $q_0=c$, each vector $q_k$,
$k=1,\dots,r-1$, is uniquely determined up to a scaling. A vector $q_k
\in \mathcal{K}_{k+1}(c, H)$ may be expressed as
\begin{equation}\label{eqn-qk}
q_k=\sum_{j=0}^k\delta^{(j)}_kH^jc,
\end{equation}
for some parameters $\delta_k^{(j)}$, $j=0,\dots,k$, uniquely determined up to
  a nonzero scaling and $\delta_k^{(k)}\ne
 0$.
This is made precise in
Lemma~\ref{lem-qkappendix}. 

Normalized Lanczos vectors are obtained when the scaling is chosen such that
$\delta_k^{(0)}=1$, and we call this the normalization
condition.\footnote{From \eqref{eqn-qk}, one can see that the vectors $q_k$ may be represented as
  $q_k=p_k(H)c$, where $p_k$ is polynomial of degree $k$, hence
  the normalization condition may be stated as
  $p_k(0)=1$, see, e.g., Gutknecht \cite{gutknechtacta}} Since $\delta_k^{(0)}$ is
determined up to a scaling it holds that if $\delta_k^{(0)} \neq 0$
then one can rescale the vector such that the normalization condition holds,
however if $\delta_k^{(0)}=0$, then this is not possible and a pivot
breakdown occurs.

The following proposition reviews a recursion for a sequence of Lanczos
vectors where the scaling factors denoted by
$\{\theta_k\}_{k=0}^{r-1}$ are left unspecified. This recursion is a slight generalization of the symmetric Lanczos
process for generating mutually orthogonal vectors, in which the usual
choice of the scaling is such that each vector $q_k$
is chosen such that $||q_k||=1$, $k=0, \dots, r-1$. For completeness,
this proposition and its proof is included.

\begin{proposition}\label{genq}
  Let $r$ denote the smallest positive integer $k$ for which $\mathcal{K}_{k+1}(c,H)= \mathcal{K}_k(c, H)$.
Given $q_0=c\in\mathcal{K}_1(c, H)$, there exist vectors $q_k$,
$k=1,\dots,r$, such that
$$
q_k \in \mathcal{K}_{k+1}(c, H)\cap \mathcal{K}_k(c, H)^{\perp}, \quad
k=1, \dots, r,
$$
for which $q_k \neq 0$, $k=1, \dots, r-1$, and $q_r=0$.  Each such
$q_k$, $k=1,\dots,r-1$, is uniquely determined up to a scaling, and a
sequence $\{q_k\}_{k=1}^r$ may be generated as
\begin{subequations}\label{Q}
\begin{eqnarray}
q_{1}&=&\theta_0 \big(-H q_0+
\frac{q_0^THq_0}{q_0^Tq_0}q_0\big),\label{q1} \\
q_{k+1}&=&\theta_{k} \big(-H q_k+
\frac{q_k^THq_k}{q_k^Tq_k}q_k+\frac{q_{k-1}^THq_k}{q_{k-1}^Tq_{k-1}}q_{k-1}\big),\label{q2}
\quad k=1,\dots, r-1,
\end{eqnarray}
\end{subequations}
where $\theta_k$, $k=0,\dots,r-1$, are free and nonzero parameters. In
addition, it holds that
\begin{equation}
q_{k+1}^T q_{k+1} = - \theta_{k} q_{k+1}^T H q_k, \quad k=0,\dots,r-1.
\end{equation}
\end{proposition}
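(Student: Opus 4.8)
The plan is to separate the claim into an abstract part---existence, uniqueness up to scaling, and termination---and a computational part, namely that the explicit recurrence \eqref{Q} realizes the required vectors. For the abstract part I would only use dimension counting. By the definition of $r$ and the linear independence of $c, Hc, \dots, H^{r-1}c$, one has $\dim \mathcal{K}_k(c,H) = k$ for $0 \le k \le r$, while $\mathcal{K}_k(c,H) = \mathcal{K}_r(c,H)$ for every $k \ge r$. Hence the intersection $\mathcal{K}_{k+1}(c,H) \cap \mathcal{K}_k(c,H)^{\perp}$ is one-dimensional for $1 \le k \le r-1$ and trivial for $k=r$. This at once gives the existence of a nonzero $q_k$ for each $k \le r-1$, its uniqueness up to a scalar, and the fact that $q_r = 0$.

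It then remains to show that \eqref{Q} produces vectors spanning these one-dimensional spaces. I would argue by induction: given mutually orthogonal $q_0,\dots,q_k$ with $q_j \in \mathcal{K}_{j+1}(c,H) \cap \mathcal{K}_j(c,H)^{\perp}$, define $q_{k+1}$ as $\theta_k$ times the component of $-Hq_k$ orthogonal to $\mathcal{K}_{k+1}(c,H) = \mathrm{span}\{q_0,\dots,q_k\}$, that is,
\[
q_{k+1} = \theta_k\Big(-Hq_k + \sum_{j=0}^k \frac{q_j^T H q_k}{q_j^T q_j}\,q_j\Big).
\]
Since $q_k \in \mathcal{K}_{k+1}(c,H)$ implies $Hq_k \in \mathcal{K}_{k+2}(c,H)$, this $q_{k+1}$ lies in $\mathcal{K}_{k+2}(c,H) \cap \mathcal{K}_{k+1}(c,H)^{\perp}$. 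The heart of the argument is that symmetry collapses the sum: writing $q_j^T H q_k = (Hq_j)^T q_k$ and using $Hq_j \in \mathcal{K}_{j+2}(c,H) = \mathrm{span}\{q_0,\dots,q_{j+1}\}$ together with $q_k \perp \mathcal{K}_k(c,H)$, the coefficient $q_j^T H q_k$ vanishes whenever $j+1 < k$, i.e.\ for $j \le k-2$. Only the terms $j=k-1$ and $j=k$ survive, producing exactly \eqref{q2} (and \eqref{q1} when $k=0$, where the $q_{k-1}$ term is absent).

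The main obstacle I anticipate is the nonvanishing bookkeeping: reconciling the explicit recurrence with the abstract dimension count, that is, checking that $q_{k+1} \neq 0$ precisely for $k+1 \le r-1$ while $q_r = 0$. For this I would invoke the representation \eqref{eqn-qk} from Lemma~\ref{lem-qkappendix}: since $q_k = \sum_{j=0}^k \delta_k^{(j)} H^j c$ with $\delta_k^{(k)} \neq 0$, the vector $Hq_k$ equals $\delta_k^{(k)} H^{k+1}c$ modulo $\mathcal{K}_{k+1}(c,H)$. For $k+1 \le r-1$ the term $H^{k+1}c$ lies outside $\mathcal{K}_{k+1}(c,H)$, so $Hq_k \notin \mathcal{K}_{k+1}(c,H)$ and its orthogonal component, hence $q_{k+1}$, is nonzero. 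For $k=r-1$ the defining relation $\mathcal{K}_{r+1}(c,H) = \mathcal{K}_r(c,H)$ forces $H^r c \in \mathcal{K}_r(c,H)$, so $Hq_{r-1} \in \mathcal{K}_r(c,H)$ and its orthogonal component $q_r$ vanishes, in agreement with the dimension count.

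Finally, the supplementary identity follows immediately. Taking the inner product of $q_{k+1}$ with both sides of \eqref{Q} and using the already-established orthogonality $q_{k+1} \perp q_{k-1}$ and $q_{k+1} \perp q_k$ annihilates the two correction terms, leaving $q_{k+1}^T q_{k+1} = -\theta_k\, q_{k+1}^T H q_k$ for $k=0,\dots,r-1$.
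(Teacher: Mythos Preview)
Your proposal is correct and follows essentially the same approach as the paper: an inductive Gram--Schmidt construction, the symmetry of $H$ to kill the coefficients $q_j^T H q_k$ for $j\le k-2$, and Lemma~\ref{lem-qkappendix} together with the nonzero leading coefficient $\delta_k^{(k)}$ to settle the nonvanishing. The only organizational difference is that you front-load the abstract dimension count (establishing existence, uniqueness up to scaling, and $q_r=0$ before touching the recurrence), whereas the paper weaves these into the induction and computes $\delta_{k+1}^{(k+1)}=(-1)^{k+1}\prod_{i=0}^k\theta_i$ explicitly; both routes arrive at the same place.
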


\begin{proof}
  Given $q_0=c$, let $k$ be an integer such that $1\le k\le
  r-1$. Assume that $q_i$, $i=0,\dots,k$, are mutually orthogonal with
  $q_i\in\mathcal{K}_{i+1}(c,H)\cap \mathcal{K}_i(c, H)^{\perp}$. Let
  $q_{k+1} \in \mathcal{K}_{k+2}(c, H)$ be expressed as
\begin{equation}\label{q}
q_{k+1}=-\theta_{k} H q_k+\sum_{i=0}^k \eta_k^{(i)} q_i, \quad k=0, \dots, r-1,
\end{equation}
In order for $q_{k+1}$ to be orthogonal to $q_i$, $i=0,\dots,k$, the
parameters $\eta_k^{(i)}$, $i=0, \dots, k$, are uniquely determined as
follows.

For $k=0$, to have $q_0^Tq_1=0$, it must hold that
$$
\eta_0^{(0)}=\theta_0\frac{q_0^THq_0}{q_0^Tq_0},
$$
hence obtaining $q_{1} \in
\mathcal{K}_{2}(c, H)\cap \mathcal{K}_1(c, H)^{\perp}$ as in \eqref{q1},
where $\theta_0$ is free and nonzero.  For $k$ such that $1\le k\le
r-1$, in order to have $q_i^T q_{k+1}=0$, $i=0, \dots, k$, it must
hold that
\[
\eta_k^{(k)}  = \theta_{k}\frac{q_{k}^THq_k}{q_{k}^Tq_{k}}, \quad
\eta_k^{(k-1)}  = \theta_{k}\frac{q_{k-1}^THq_k}{q_{k-1}^Tq_{k-1}},
\text{and}
\eta_k^{(i)} =0, \quad i = 0,\dots, k-2.
\]
The last relation follows by the symmetry of $H$. Hence, obtaining
$q_{k+1} \in
\mathcal{K}_{k+2}(c, H)\cap \mathcal{K}_{k+1}(c, H)^{\perp}$ as in
the three-term recurrence of \eqref{q2},
where $\theta_{k}$, $k=1,\dots, r-1$, are free and nonzero.

Since $q_1$ is orthogonal to $q_0$, and since $q_{k+1}$ is orthogonal
to $q_k$ and $q_{k-1}$, $k=1, \dots, r-1$, pre-multiplication of \eqref{Q} with $q_{k+1}^T$ yields
\[
q_{k+1} ^T q_{k+1} =-\theta_{k} q_{k+1} ^T H q_k, \quad k=0, \dots, r-1 .
\]
Finally note that if $q_{k+1}$ is given by \eqref{Q}, then the only
term that increases the power of $H$ is $\theta_k (-Hq_k)$. Since
$\theta_k\ne0$, repeated use of this argument gives
$\delta_{k+1}^{(k+1)}\ne 0$ if $q_{k+1}$ is expressed by
\eqref{eqn-qk}. In fact,
$\delta_{k+1}^{(k+1)} = (-1)^{k+1}\prod_{i=0}^k \theta_i \ne 0$.
Hence, by Lemma~\ref{lem-qkappendix}, $q_{k+1}=0$ implies
$\mathcal{K}_{k+2}(c, H)= \mathcal{K}_{k+1}(c, H)$,
so that $k+1=r$, as required.
\end{proof}

The particular form of \eqref{Q} with scaling parameters $\theta_k$,
$k=0,\dots,r$, is made to get coherence with existing theory on the
method of conjugate gradients, see Section~\ref{sec-cg} and
Proposition~\ref{cgstep}.  To simplify the exposition, the following
notation is introduced,
\begin{equation}\label{alphabeta}
\alpha_0=\frac{q_0^THq_0}{q_0^Tq_0},
\quad\alpha_k=\frac{q_k^THq_k}{q_k^Tq_k},  \quad
\beta_{k-1}=\frac{q_{k-1}^THq_k}{q_{k-1}^Tq_{k-1}} \quad k=1, \dots, r-1.
\end{equation}

Let $Q_k$ be the matrix with the Lanczos vectors $q_0, q_1,
\dots, q_k$ as columns vectors, then \eqref{Q} may be written on
matrix form as,
$$
HQ_k=Q_{k+1}\bar{T}_k=Q_kT_k-\frac{1}{\theta_k}q_{k+1}e_{k+1}^T,
$$
where
\begin{equation}\label{matrixfac}
T_k=\left(
\begin{array}{cccc}
\alpha_0&\beta_0& &\\
-\frac{1}{\theta_0} & \ddots&\ddots & \\
& \ddots &\ddots & \beta_{k-1}\\
& & -\frac{1}{\theta_{k-1}}& \alpha_k
\end{array} \right), \quad \quad
\bar{T}_k=\left(
\begin{array}{c}
T_k\\
-\frac{1}{\theta_k}e_{k+1}^T
\end{array} \right).
\end{equation}
The choice of $\theta_k$ such that $||q_k||_2=||q_0||_2$ implies
$\beta_k=-\frac{1}{\theta_k}$ and in this case $T_k$ will be
symmetric. Changing the set of $\{\theta_k\}_{k=0}^{r-1}$ can be seen as a similarity transform
of $T_k$, see, e.g., Gutknecht \cite{gutknechtacta}.

Many methods for solving \eqref{lineq} use matrix-factorization
techniques on $T_k$ or $\bar{T}_k$. For an introduction to how Krylov subspace methods are formalized
in this way, see, e.g., Paige, Saunders and Choi \cite{paige94, choipaigesaunders}. For our purposes we leave these available scaling
factors unspecified and work with the recursions \eqref{Q} directly.

\subsection{An extended representation of the unnormalized Lanczos vectors}

To find a solution of \eqref{lineq}, if it exists, it is not
sufficient to generate the sequence $\{q_k\}_{k=1}^r$. Note that, as
in \eqref{eqn-qk}, $q_{k} \in \mathcal{K}_{k+1}(c, H)$, $k=0, \dots,
r$, may be expressed as
\begin{equation}\label{eqn-qkII}
q_k=\sum_{j=0}^k\delta^{(j)}_kH^jc=H
\big(\sum_{j=1}^k\delta^{(j)}_kH^{j-1}c\big)+\delta^{(0)}_kc, \quad k=1, \dots, r.
\end{equation}
It is not
convenient to represent $q_k$ by \eqref{eqn-qk}. Therefore, defining $y_0=0$, $\delta_0=1$,
$$
y_k =  \sum_{j=1}^k\delta^{(j)}_kH^{j-1}c \in \mathcal{K}_{k}(c, H)
\text{and} \delta_k = \delta^{(0)}_k,
\quad k=1, \dots, r,
$$ 
it follows from \eqref{eqn-qkII} that
$$
q_k=Hy_k+\delta_k c,
$$ 
so that $q_k$ may be expressed by $y_k$ and $\delta_k$. These
quantities will be represented by the triples $(q_k, y_k, \delta_k)$,
$k=0, \dots, r$.  Note that $\{\delta_k^{(j)}\}_{j=0}^k$ are given in
association with the raw data $H$ and $c$, the choice made here is to
use only $\delta_k^{(0)}$ explicitly and collect all other terms in
$y_k$.

It is
straightforward to note that if $\delta_r\ne 0$, then $0=q_r=H x_r+c$
for $x_r=(1/\delta_r)y_r$, so that $x_r$ solves \eqref{lineq}. It will
be shown that \eqref{lineq} has a solution if and only if
$\delta_r\ne 0$.

As mentioned earlier, for
a given $k$ such that $1\le k\le r$, the parameters $\delta_k^{(j)}$,
$j=1,\dots,k$, are uniquely defined up to a scaling. Hence, so is the
triple $(q_k, y_k, \delta_k)$. This is made precise in the recursions
for the triples given in Lemma~\ref{rec}. 

It is possible to use more of the coefficients $\{\delta_k^{(j)}\}_{j=1}^k$
explicitly in the same representation as above. For the next power of the polynomial in \eqref{eqn-qkII}, let
\begin{eqnarray}\label{eqn-y1}
y_k  & = & H y_k^{(1)} + \delta_k^{(1)} c, \text{with} \\
y_k^{(1)} & = & \sum_{j=2}^k\delta^{(j)}_kH^{j-2}c \in
\mathcal{K}_{k-1}(c, H),
\quad k=2, \dots, r, \nonumber
\end{eqnarray} 
in addition to $y_1^{(1)}=0$. This will be used in the
analysis, but not in the algorithm presented.

Based on Proposition \ref{genq}, given
$(q_0,y_0,\delta_0)=(c,0,1)$, one can formulate recursions for
$(q_k,y_k,\delta_k)$, $k=1, \dots, r$. 
This derivation is given by Gutknecht in e.g.
\cite{gutknechtacta}, but we give the following lemma for completeness.

\begin{lemma}\label{rec}
  Let $r$ denote the smallest positive integer $k$ for which $\mathcal{K}_{k+1}(c,
  H)= \mathcal{K}_k(c, H)$. Given $(q_0,y_0,\delta_0)=(c,0,1)$, there exist vectors
$(q_k,y_k,\delta_k)$, $k=1,\dots,r$, such that
$$
q_k \in \mathcal{K}_{k+1}(c, H)\cap \mathcal{K}_k(c, H)^{\perp}, \quad
y_k \in \mathcal{K}_k(c, H), \quad q_k=Hy_k+\delta_k c, \quad
k=1, \dots, r,
$$
for which $q_k \neq 0$, $k=1, \dots, r-1$, and $q_r=0$. Each such
$(q_k,y_k,\delta_k)$, $k=1,\dots,r$, is uniquely determined up to a
scalar, and a sequence $\{(q_k,y_k,\delta_k)\}_{k=1}^r$ may be generated as
\[
y_{1}= \theta_0\big(-q_0+ \alpha_0y_0\big),\quad
\delta_{1}= \theta_0\big(\alpha_0\delta_0\big),\quad 
q_1 =\theta_0 \big(-H q_0+
\alpha_0q_0\big), 
\]
and
\begin{align*}
y_{k+1}&=\theta_k \big(-q_k+
\alpha_ky_k+
\beta_{k-1}y_{k-1}\big),
& k = 1,\dots,r-1, \\
\delta_{k+1}&=\theta_k \big(
\alpha_k\delta_k+
\beta_{k-1}\delta_{k-1}\big),
\ & k =1,\dots, r-1, \\
q_{k+1} & =  \theta_{k} \big(-H q_k+
\alpha_kq_k+\beta_{k-1}q_{k-1}\big),
& k = 1,\dots,r-1,
\end{align*}
where $\theta_k$, $k=0,\dots,r-1$, are free and nonzero
parameters, and $\alpha_k$, $k=0,\dots,r-1$ and $\beta_{k-1}$,
$k=1,\dots,r-1$ are given by \eqref{alphabeta}. 
In addition, it holds that $y_k$ are linearly independent
for $k=1,\dots,r$.
\end{lemma}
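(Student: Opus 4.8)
The plan is to exploit the explicit polynomial representation of each $y_k$ together with the fact, already established in Proposition~\ref{genq}, that its leading coefficient does not vanish. Recall from the definition that $y_k = \sum_{j=1}^k \delta_k^{(j)} H^{j-1}c$, so $y_k$ is a polynomial of degree exactly $k-1$ in $H$ applied to $c$, whose top term is $\delta_k^{(k)} H^{k-1}c$. By the argument at the end of the proof of Proposition~\ref{genq}, $\delta_k^{(k)} = (-1)^k \prod_{i=0}^{k-1}\theta_i \neq 0$, since each $\theta_i$ is nonzero. Thus the vectors $y_1, \dots, y_r$ all lie in $\mathcal{K}_r(c,H)$ and have distinct, strictly increasing leading powers $H^0 c, H^1 c, \dots, H^{r-1}c$, each carrying a nonzero coefficient.

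I would then conclude linear independence by the standard leading-term (triangularity) argument. Since the Krylov vectors $c, Hc, \dots, H^{r-1}c$ form a basis of $\mathcal{K}_r(c,H)$, expressing each $y_k$ in this basis produces an $r \times r$ coefficient matrix that is lower triangular, with diagonal entries $\delta_1^{(1)}, \dots, \delta_r^{(r)}$, all nonzero; hence it is nonsingular and $\{y_k\}_{k=1}^r$ is linearly independent. Equivalently, and perhaps more transparently, I would suppose $\sum_{k=1}^r c_k y_k = 0$ and examine the coefficient of $H^{r-1}c$: only $y_r$ contributes there, forcing $c_r \delta_r^{(r)} = 0$ and hence $c_r = 0$; peeling off the coefficient of $H^{r-2}c$ next gives $c_{r-1} = 0$, and a downward induction on the highest occurring power yields $c_k = 0$ for all $k$.

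There is no serious obstacle here; the content is entirely in the two facts supplied upstream — the explicit expansion $y_k = \sum_{j=1}^k \delta_k^{(j)} H^{j-1}c$ with $\delta_k^{(k)} \neq 0$, and the linear independence of $c, Hc, \dots, H^{r-1}c$. The only points requiring a little care are to confirm that all $r$ vectors $y_1, \dots, y_r$ genuinely lie in the same $r$-dimensional space $\mathcal{K}_r(c,H)$ (so that the triangular coefficient matrix is square), and to note that the relevant leading coefficient of $y_k$ is the very same $\delta_k^{(k)}$ shown to be nonzero for $q_k$, since $q_k = Hy_k + \delta_k c$ shares the coefficients $\delta_k^{(j)}$, $j \ge 1$, with $Hy_k$.
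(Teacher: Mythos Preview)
Your argument for the linear independence of $y_1,\dots,y_r$ is correct, but it differs from the paper's. The paper works directly from the recursion $y_{k+1}=\theta_k(-q_k+\alpha_k y_k+\beta_{k-1}y_{k-1})$: since $y_i\in\mathcal{K}_i(c,H)\subset\mathcal{K}_k(c,H)$ for $i\le k$ while $q_k\in\mathcal{K}_k(c,H)^\perp$ with $q_k\ne 0$ for $k<r$, the term $-\theta_k q_k$ gives $y_{k+1}$ a nonzero component orthogonal to $\Span\{y_1,\dots,y_k\}$, so $y_{k+1}$ cannot lie in that span. You instead expand each $y_k$ in the Krylov basis $c,Hc,\dots,H^{r-1}c$ and read off linear independence from the lower-triangular coefficient matrix with nonzero diagonal $\delta_k^{(k)}$. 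The paper's route is a touch shorter because the orthogonality $q_k\perp y_i$ is immediate from what has already been set up, whereas your approach revisits the polynomial representation; on the other hand, your triangularity argument makes the dimension count $\dim\mathcal{K}_r(c,H)=r$ explicit and does not rely on the inner-product structure at all, only on the linear independence of the Krylov vectors.
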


\begin{proof}
The recursions are given by simple induction on $k$. We omit the
details, see, e.g. \cite{gutknechtacta}.

By Lemma~\ref{lem-qkappendix} it holds that for $k=0, \dots, r$,
$\delta_k^{(j)}$, $j=0, \dots, k$ are uniquely determined up to a
scaling, hence it follows that $y_{k+1}$ and $\delta_{k+1}$, $k=0,
\dots, r-1$ are uniquely determined up to a scaling by the recursions of this
proposition.

Further, note that the recursion for $y_{k+1}$ has a nonzero leading
term of $q_k$ plus additional terms of $y_i$, $i=k$ and $i=k-1$. Since
$q_k$ is orthogonal to $y_i$ for $i\le k$ and $q_k\ne 0$ for $k<r$, it
follows that the vectors $y_{k+1}$ are linearly independent for $k=0,\dots,r-1$.
\end{proof}

Note that the choice 
\begin{equation}\label{normcond}
\theta_0=\frac{1}{\alpha_0}, \quad
\theta_k=\frac{1}{\alpha_k+\beta_{k-1}}, \quad k=1, \dots, r-1,
\end{equation}
in the recursions of Lemma~\ref{rec} implies $\delta_k=1$, $k=0,
\dots, r$. Hence, this choice will give rise to Lanczos vectors
that are on the form of gradients. The terms $g_k$ and $x_k$ are reserved for this case,
and we then denote $(q_k,y_k , \delta_k)$ by $(g_k, x_k, 1)$.  Therefore, \eqref{normcond} is another way
of stating the normalization condition. Note that if
$\alpha_k+\beta_{k-1}=0$, for some $k$, then this particular choice is not well
defined and a pivot breakdown occurs.  In the unnormalized Krylov
subspace framework the choice of scaling will not be based on the value of $\delta_k$.

\section{Properties of the unnormalized Krylov
  framework}\label{sec-prop}

We will henceforth refer to the unnormalized Lanczos triples $(q_k,y_k,\delta_k)$, $k=0,
\dots, r$, as \emph{given by} Lemma~\ref{rec}. Based on the
unnormalized framework due to Gutknecht that has been described in the previous
section we will now proceed to state our results.

\subsection{Convergence in the unnormalized Krylov
  framework}\label{sec-res}
The final triple, $(q_r,y_r,\delta_r)$, can now be used to show
our main convergence result, namely that \eqref{lineq} has a solution
if and only if $\delta_r\ne 0$, and that the recursions in Lemma~\ref{rec} can be used to
find a solution if $\delta_r\ne 0$ and a certificate of
incompatibility if $\delta_r=0$. The case when $H$ is singular is
included and handled in this result. 

\begin{theorem}\label{conv}
  Let $(q_k, y_k,\delta_k)$, $k=0, \dots, r$, be given by Lemma
  \ref{rec}, and let $Z$ denote a matrix whose columns form an
  orthonormal basis for $\mathcal{N}(H)$. Then, the following holds
  for the cases $\delta_r\ne 0$ and $\delta_r=0$ respectively.
  \def\labelenumi{\alph{enumi})}
\begin{enumerate}
\item If $\delta_r \neq 0$, then $Hx_r+c=0$ for $x_r=(1/\delta_r)y_r$,
  so that $c\in\mathcal{R}(H)$ and $x_r$ solves \eqref{lineq}. In
  addition, it holds that $Z\T y_k=0$, $k=0,\dots,r$.
\item If $\delta_r = 0$, then $y_r = \delta_r^{(1)} ZZ^Tc$, with
  $\delta_r^{(1)} \neq 0$ and $Z^Tc \neq 0$, so that $c\not
  \in\mathcal{R}(H)$ and \eqref{lineq} has no solution. Further, there
  is a $y_r^{(1)}\in \mathcal{K}_{k-1}(c, H)$ so that $y_r= H
  y_r^{(1)} + \delta_r^{(1)} c$. Hence, $H(H x_r^{(1)} + c)=0$ for
  $x_r^{(1)}=(1/\delta_r^{(1)})y_r^{(1)}$, so that $x_r^{(1)}$ solves
  $\min_{x\in\Re^n}\norm{Hx+c}_2^2$.
\end{enumerate}
\end{theorem}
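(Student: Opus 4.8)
The plan is to base everything on the terminal relation $q_r = H y_r + \delta_r c = 0$ supplied by Lemma~\ref{rec}, splitting the argument according to whether $\delta_r \neq 0$ or $\delta_r = 0$. The two structural facts I would lean on throughout are that, because $H$ is symmetric, $\mathcal{R}(H) = \mathcal{N}(H)^{\perp}$, so that $HZ = 0$ forces $Z^T H = 0$, and that the vectors $y_1,\dots,y_r$ are linearly independent (hence $y_r \neq 0$), as established at the end of Lemma~\ref{rec}.

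For part (a), assume $\delta_r \neq 0$. Then $q_r = 0$ reads $H y_r = -\delta_r c$, so dividing by $\delta_r$ gives $H x_r + c = 0$ with $x_r = (1/\delta_r) y_r$; in particular $c = -H x_r \in \mathcal{R}(H)$ and $x_r$ solves \eqref{lineq}. For the claim $Z^T y_k = 0$, I would observe that $c \in \mathcal{R}(H) = \mathcal{N}(H)^{\perp}$ gives $Z^T c = 0$, while $Z^T H = 0$ gives $Z^T H^j c = 0$ for every $j \geq 1$; since $y_k \in \mathcal{K}_k(c,H) = \mathrm{span}\{c, Hc, \dots, H^{k-1}c\}$, the matrix $Z^T$ annihilates $y_k$ for all $k = 0, \dots, r$ (with $y_0 = 0$ handled trivially).

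For part (b), assume $\delta_r = 0$. Then $q_r = 0$ becomes $H y_r = 0$, i.e.\ $y_r \in \mathcal{N}(H)$, so the orthogonal projector $ZZ^T$ onto $\mathcal{N}(H)$ fixes it: $y_r = ZZ^T y_r$. I would then invoke the representation $y_r = H y_r^{(1)} + \delta_r^{(1)} c$ from \eqref{eqn-y1} and apply $ZZ^T$; since $Z^T H = 0$ kills the first term, this yields $y_r = \delta_r^{(1)} ZZ^T c$. Because $y_r \neq 0$ by the linear independence in Lemma~\ref{rec}, both $\delta_r^{(1)} \neq 0$ and $ZZ^T c \neq 0$ must hold, the latter forcing $Z^T c \neq 0$; hence $c \notin \mathcal{N}(H)^{\perp} = \mathcal{R}(H)$ and \eqref{lineq} has no solution. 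Finally, combining $H y_r = 0$ with $y_r = H y_r^{(1)} + \delta_r^{(1)} c$ gives $H(H y_r^{(1)} + \delta_r^{(1)} c) = 0$; dividing by $\delta_r^{(1)} \neq 0$ and setting $x_r^{(1)} = (1/\delta_r^{(1)}) y_r^{(1)}$ produces $H(H x_r^{(1)} + c) = 0$. Since $H$ is symmetric, this is precisely the normal equation $H^T(Hx + c) = 0$ of the convex least-squares problem $\min_{x}\norm{Hx+c}_2^2$, so $x_r^{(1)}$ is a minimizer.

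The main obstacle---or rather the one step that needs genuine care rather than routine algebra---is the projection argument in part (b): recognizing that $ZZ^T$ is the orthogonal projector onto $\mathcal{N}(H)$, that $Z^T H = 0$ lets it annihilate the $H y_r^{(1)}$ term, and then using $y_r \neq 0$ to upgrade $y_r = \delta_r^{(1)} ZZ^T c$ into the two nonvanishing conclusions $\delta_r^{(1)} \neq 0$ and $Z^T c \neq 0$ simultaneously. Everything else is a direct read-off from $q_r = 0$ together with the symmetry identity $\mathcal{R}(H) = \mathcal{N}(H)^{\perp}$.
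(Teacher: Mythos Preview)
Your proof is correct and follows essentially the same approach as the paper's. The only minor variation is in part~(a): to obtain $Z^T y_k = 0$ the paper invokes the representation $y_k = H y_k^{(1)} + \delta_k^{(1)} c$ from \eqref{eqn-y1} and then uses $Z^T c = 0$, whereas you argue directly that $Z^T$ annihilates every generator of $\mathcal{K}_k(c,H)$; both routes are equivalent and equally short.
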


\begin{proof}
For (a), suppose that $\delta_r \neq 0$. Then 
$0=q_r=Hy_r+\delta_rc$, hence $Hx_r+c=0$ for $x_r=(1/\delta_r)y_r$, i.e., 
$x_r=(1/\delta_r)y_r$ is a solution to \eqref{lineq}. Since
\eqref{lineq} has a solution, it must hold that $Z\T c=0$. We have
$y_k = H y_k^{(1)} + \delta_k^{(1)} c$ for $k=0,\dots,r$, so that $Z\T
y_k = \delta^{(1)}_kZ^Tc$. As $Z\T c=0$, it follows that $Z\T y_k=0$, $k=0,\dots,r$.

For (b), suppose that $\delta_r = 0$. We have $y_r = H y_r^{(1)} + \delta_r^{(1)} c$, so that
  $Z^Ty_r=\delta^{(1)}_rZ^Tc$. If $\delta_r = 0$, then $0=q_r=Hy_r$ so
  that $y_r=\delta^{(1)}_rZ Z^Tc$. It follows from
  Proposition~\ref{rec} that $y_r\ne 0$ so that $\delta^{(1)}_r \ne0$
  and $Z\T c \ne 0$. A combination of $Hy_r=0$ and $y_r = H y_r^{(1)}
  + \delta_r^{(1)} c$ gives $H( H y_r^{(1)} + \delta_r^{(1)}
  c)=0$. Consequently, since $\delta_r^{(1)}\ne 0$, it holds that
 $H( H x_r^{(1)} + c)=0$ for
 $x_r^{(1)}=(1/\delta_r^{(1)})y_r^{(1)}$. With
 $f(x)=\half\norm{Hx+c}^2_2$, one obtains $\grad f(x)=H(Hx+c)$, so that 
$x_r^{(1)}$ is a global minimizer to $f$ over $\Re^n$. 
\end{proof}

Hence, we have shown that \eqref{lineq} has a solution
if and only if $\delta_r\ne 0$, and that the recursions in Lemma~\ref{rec} can be used to
find a solution if $\delta_r\ne 0$ and a certificate of
incompatibility if $\delta_r=0$.

We can make a few comments on the sequence $\{\delta_k\}$. One can show
that the sequence will never have two zero element in a row.\footnote{This
property is used in composite step biconjugate gradient
method and other look-ahead techniques to show that a composite step or
look-ahead block of size two is sufficient to avoid breakdown, see,
e.g., \cite{BankChan, ChanSzeto}.} Also, if
$\theta_{k-1}$ and $\theta_k$ have the same sign and $\delta_k=0$,
then $\delta_{k+1}\delta_{k-1}<0$. We give direct proofs of these properties, 
using only the recursions of the triples, in Appendix \ref{comment-delta}. 

\subsection{An unnormalized Krylov algorithm}\label{sec-krylovalg}

To summarize the derivation up to this point we now state an algorithm for solving
\eqref{lineq} based on the triples $(q_k, y_k,\delta_k)$, $k=0, \dots,
r$, given by Lemma \ref{rec} using some $\theta_k$ of our
choice. Algorithm \ref{krylovalg} is called a unnormalized Krylov
algorithm\footnote{In the terminology of Gutknecht's survey, \cite{gutknechtacta}, this
  method would be called inconsistent ORes version of the method of
  conjugate gradients.} as it is
the unnormalized vectors $\{q_k\}$, spanning the Krylov subspaces, that drive the
progress of the algorithm.

In the unnormalized setting, the choice of a nonzero $\theta_k$ is in theory arbitrary, but for the
algorithm stated below we have made the choice to let
$\theta_k>0$ such that $\norm{y_{k+1}}_2=\norm{c}_2$.
This choice is well defined since $y_k\ne 0$, $k=1,\dots,r$, by
Lemma~\ref{rec}. 

In theory, triples are generated as long
as $q_k\ne 0$. In the algorithm, we introduce a tolerance such that
the iterations proceed as long as $\norm{q_k}_2> q_{tol}$, where we
let $q_{tol}=\sqrt{\epsilon_M}$, where $\epsilon_M$ is the machine
precision. In theory, we also draw conclusions based on $\delta_r \ne
0$ or $\delta_r = 0$, for this we introduce a tolerance
$\delta_{tol}=\sqrt{\epsilon_M}$.

\begin{algorithm}[htb]
\caption{An unnormalized Krylov algorithm}
\label{krylovalg}
\begin{algorithmic}
\State Input arguments: $H$, $c$;
\State Output arguments: compatible; $x_r$ if compatible=1; $y_r$ if
compatible=0;
\State $q_{tol} \gets$ tolerance on $\norm{q}_2$;
\Comment{Our choice: $q_{tol}=\sqrt{\epsilon_M}$}
\State $\delta_{tol} \gets$ tolerance on $\abs{\delta}$;
\Comment{Our choice: $\delta_{tol}=\sqrt{\epsilon_M}$}
\State $k \gets 0$;
\quad $q_0 \gets c$; \quad $y_0 \gets 0$; \quad $\delta_0 \gets 1$;
\State $\alpha_0 \gets \frac{q_0^THq_0}{q_0^Tq_0}$;
\State $q_{1}\gets \big(-Hq_0+
\alpha_0q_0\big)$; \quad $y_{1} \gets \big(-q_0+
\alpha_0y_0\big)$; 
\quad $\delta_{1} \gets \big(
\alpha_0\delta_0\big)$;

\State $\theta_{0} \gets$ nonzero scalar; 
\Comment{Our choice: $\theta_0=\norm{c}_2/\norm{y_1}_2$}

\State $q_{1}\gets \theta_0 q_1$; \quad 
$y_{1}\gets \theta_0 y_1$; \quad
$\delta_{1}\gets \theta_0 \delta_1$;

\State $k\gets 1$;

\While {$\norm{q_k}_2 > q_{tol}$}
\State $\alpha_k \gets \frac{q_k^TH q_k}{q_k^T q_k}$; \quad
$\beta_{k-1} \gets \frac{q_{k-1}^TH q_k}{q_{k-1}^T q_{k-1}}$;
\State $q_{k+1} \gets \big(-Hq_k+
\alpha_kq_k+
\beta_{k-1}q_{k-1}\big)$; 
    \State $y_{k+1} \gets \big(-q_k+
\alpha_ky_k+
\beta_{k-1}y_{k-1}\big)$; 
\quad $\delta_{k+1}\gets \big(
\alpha_k\delta_k+
\beta_{k-1}\delta_{k-1}\big)$; 
\State $\theta_{k} \gets$ nonzero scalar;
\Comment{Our choice: $\theta_k=\norm{c}_2/\norm{y_{k+1}}_2$}
\State $q_{k+1}\gets \theta_k q_{k+1}$; \quad
$y_{k+1}\gets \theta_k y_{k+1}$; \quad
$\delta_{k+1}\gets \theta_k \delta_{k+1}$;

\State $k \gets k+1$;

\EndWhile

\State $r \gets k$;

\If {$\abs{\delta_r} > \delta_{tol}$} 
\State $x_r\gets \frac{1}{\delta_r} y_r$; \quad compatible $\gets1$;
\Else 
\State compatible $\gets0$;
\EndIf
\end{algorithmic}
\end{algorithm}

By Theorem \ref{conv}, Algorithm \ref{krylovalg} will return either a solution to \eqref{lineq} or a
certificate that the system is incompatible. 

The following small example is chosen to illustrate Algorithm \ref{krylovalg}, with
our choices for $\theta_k >0$, $q_{tol}$ and $\delta_{tol}$, on a
compatible case of \eqref{lineq} where $H$ is a singular matrix. The example
also illustrates the change of sign between $\delta_{k+1}$ and
$\delta_{k-1}$ when $\delta_k=0$.
\begin{example}\label{ex-krylovalg}
Let
\begin{equation*}
c=\left(
\begin{array}{ccccccc} 
 3& 2& 1& 0& -1&-2 &-3
\end{array} \right)^T, \quad
H=diag(c),
\end{equation*}
Algorithm \ref{krylovalg} applied to $H$ and $c$ with $\theta_k>0$
such that $||y_k||=||c||$, $k=1,\dots, r$, and
$q_{tol}=\delta_{tol}=\sqrt{\epsilon_M}$, yields the
following sequences

{\footnotesize   
\begin{verbatim}

q =
      3.0000   -9.0000    2.2678   -2.7046    0.2648   -0.2445    0.0000
      2.0000   -4.0000   -2.2678    5.4912   -1.0591    1.4673         0
      1.0000   -1.0000   -2.2678    2.3768    1.3239   -3.6681         0
           0         0         0         0         0         0         0
     -1.0000   -1.0000    2.2678    2.3768   -1.3239   -3.6681         0
     -2.0000   -4.0000    2.2678    5.4912    1.0591    1.4673         0
     -3.0000   -9.0000   -2.2678   -2.7046   -0.2648   -0.2445   -0.0000

y =
           0   -3.0000    3.4017   -0.9015   -2.2241   -0.0815    2.1602
           0   -2.0000    1.5119    2.7456   -2.8419    0.7336    2.1602
           0   -1.0000    0.3780    2.3768   -0.9885   -3.6681    2.1602
           0         0         0         0         0         0         0
           0    1.0000    0.3780   -2.3768   -0.9885    3.6681    2.1602
           0    2.0000    1.5119   -2.7456   -2.8419   -0.7336    2.1602
           0    3.0000    3.4017    0.9015   -2.2241    0.0815    2.1602

delta =
      1.0000         0   -2.6458         0    2.3123         0   -2.1602
\end{verbatim}
}
\noindent
Hence, $r=6$ and $x_r=(1/\delta_r)y_r=\left(
\begin{array}{ccccccc} 
 -1& -1& -1& 0& -1&-1 &-1
\end{array} \right)^T$.

\end{example}
An example of an incompatible system will be given in Section~\ref{sec-minresalg}.

\subsection{On the case when normalization is well
  defined}\label{sec-cg}

It is well-known that when normalization is well defined and applied
to Algorithm~\ref{krylovalg}, then the method of conjugate gradients,
by Hestenes and Stiefel \cite{HestenesStiefel}, is obtained. In this
case, we denote $(q_k, y_k,\delta_k)$, by $(g_k,x_k,1)$ and the
recursions for $g_k$ and $x_k$ simplify such that it is possible to
obtain a two-term recurrence of a search-direction $p_k$. The
normalization condition for $\theta_k$ is
$\theta_k=1/(\alpha_k+\beta_{k-1})$. We show in
Proposition~\ref{cgstep} that this choice for $\theta_k$ corresponds
exactly to the optimal step-length along $p_k$, which was the
motivation for setting up the recursion \eqref{Q} on that particular
form.

In Lemma~\ref{cor-curv}, we show that if $H \succeq 0$ then $\delta_i
\neq 0$, for $i=0, \dots, r-1$. Also, if $\delta_k > 0$ and
$\delta_{k+1} \neq 0$, then $\delta_{k+1} > 0$ if and only if
$\theta_k >0$. Hence, it holds that for $H \succeq 0$, $\theta_i >0$, $i=0, \dots, r-1$, and $\delta_0>0$,
then $\delta_i>0$, $i=0, \dots, r-1$, and $\delta_r \geq 0$. With the
additional information that $c \in \mathcal{R}(H)$ it holds that $\delta_r >0$ and normalization is possible
in every iteration. On the other hand for $H \succeq 0$, $\theta_i
>0$, $i=0, \dots, r-1$, $\delta_0>0$ and $c \notin
\mathcal{R}(H)$, then $\delta_r =0$ and
normalization is possible at all but the final iteration. Further,
if $H \succ 0$ and $\theta_i >0$, $i=0, \dots, r-1$, then 
$\delta_i > 0$, $i=1, \dots, r$, i.e., $\delta_r \neq 0$ since
\eqref{lineq} with $H \succ 0$ is always compatible.

\section{Connection to the minimum-residual
  method}\label{sec-conminres}

In the case when \eqref{lineq}
is incompatible, instead of just a certificate of
this fact, one would often be interested in a vector $x$ that is
"as good as possible''. The method of choice could then be the minimum
residual method which will return a solution in the compatible case,
and a minimum-residual solution in the incompatible case. 
This method goes back to Lanczos early paper \cite{lanczos52} and
Stiefel \cite{stiefel55}, and the name is adopted from the implementation of
the method, MINRES, by Paige and Saunders, see
\cite{paigesaunders75}. 

For $k=0,\dots,r$, $x_k^{MR}$ is defined as a solution to
$\min_{x \in \mathcal{K}_k(c,H)}\norm{Hx+c}_2^2$, and the corresponding
residual $g_k^{MR}$ is defined as $g_k^{MR}=Hx_k^{MR}+c$. The vectors
$x_k^{MR}$ are uniquely defined for $k=0,\dots,r-1$, and for $k=r$ if
$c\in \mathcal{R}(H)$. For the case $k=r$ and $c\not\in\mathcal{R}(H)$
there is one degree of freedom for $x_r^{MR}$. If $c\in
\mathcal{R}(H)$, then $x_r^{MR}$ solves \eqref{lineq}, and if
$c\not\in \mathcal{R}(H)$, then $x_{r-1}^{MR}$ and $x_r^{MR}$ are both
solutions to $\min_{x\in\Re^n}\norm{Hx+c}_2^2$.

\subsection{Convergence of the minimum-residual method}\label{sec-minres}

In the following theorem, we derive the minimum-residual method based
on the unnormalized Krylov subspace framework. In particular, we give explicit formulas for
$x_k^{MR}$ and $g_k^{MR}$, $k=0,\dots,r$. For the case $k=r$,
$c\not\in\ \mathcal{R}(H)$, we give an explicit formula for $x_r^{MR}$
of minimum Euclidean norm.

\begin{theorem}\label{thm-minres}
  Let $(q_k,y_k,\delta_k)$ be given by Lemma~\ref{rec} for
  $k=0,\dots,r$. Then, for $k=0,\dots,r$, it holds that $x_k^{MR}$ solves
  $\min_{x\in\mathcal{K}_k(c,H)}\norm{Hx+c}_2^2$ if and only if
  $x_k^{MR} =\sum_{i=0}^k \gamma_i y_i$ for some $\gamma_i$,
  $i=0,\dots,k$, that are optimal to 
\begin{equation}\label{minreskrylov}
\begin{array}{ll} 
\minimize{\gamma_0,\dots,\gamma_k} & \disp\half \sum_{i=0}^k
\gamma_i^2 q_i^T q_i \\
\subject & \sum_{i=0}^k \gamma_i \delta_i=1.
\end{array}  
\end{equation}
In particular, $x_k^{MR}$ takes the following form for the mutually
exclusive cases (a) $k<r$; (b) $k=r$ and $\delta_r\ne 0$; and (c)
$k=r$ and $\delta_r=0$.

\def\labelenumi{\alph{enumi})}
\begin{enumerate}

\item
For $k<r$, it holds that
\begin{equation}\label{xmrk}
  x_k^{MR}=\frac{1}{\sum_{j=0}^k\frac{\delta_j^2}{q_j^Tq_j}}\sum_{i=0}^k\frac{\delta_i}{q_i^Tq_i}y_i,
\end{equation}
and $g_k^{MR}=Hx_k^{MR}+c\ne 0$.

\item For $k=r$ and $\delta_r\ne 0$, it holds that $x_r^{MR} =
  ({1}/{\delta_r}){y_r}$ and $g_r^{MR}=Hx_r^{MR}+c=0$, so that
  $x_r^{MR}$ solves \eqref{lineq} and $x_r^{MR}$ is
  identical to $x_r$ of Theorem~\ref{conv}.

\item For $k=r$ and $\delta_r= 0$, it holds that $x_r^{MR} =
  x_{r-1}^{MR} + \gamma_r y_r$, where $\gamma_r$ is an arbitrary
  scalar, and $g_r^{MR}=Hx_r^{MR}+c = g_{r-1}^{MR}\ne 0$. In addition,
  $x_{r-1}^{MR}$ and $x_{r}^{MR}$ solve
  $\min_{x\in\Re^n}\norm{Hx+c}_2^2$. The particular choice
\[
\gamma_r = -\frac{y_r\T x_{r-1}^{MR}}{y_r\T y_r}
\]
makes $x_r^{MR}$ an optimal solution to
$\min_{x\in\Re^n}\norm{Hx+c}_2^2$ of minimum Euclidean norm.
\end{enumerate}
\end{theorem}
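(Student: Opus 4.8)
The plan is to reduce the least-squares problem $\min_{x\in\mathcal{K}_k(c,H)}\norm{Hx+c}_2^2$ to the finite constrained quadratic program \eqref{minreskrylov} by exploiting the representation $q_i=Hy_i+\delta_i c$ of the Lanczos triples, and then to read off each of the three cases from the structure of that program.

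First I would establish that $\{y_1,\dots,y_k\}$ is a basis of $\mathcal{K}_k(c,H)$: by Lemma~\ref{rec} these vectors are linearly independent and each lies in $\mathcal{K}_k(c,H)$, and since $\dim\mathcal{K}_k(c,H)=k$ they span it. Hence every $x\in\mathcal{K}_k(c,H)$ may be written $x=\sum_{i=1}^k\gamma_i y_i=\sum_{i=0}^k\gamma_i y_i$ (recall $y_0=0$). Substituting $Hy_i=q_i-\delta_i c$ and using $q_0=c$, $\delta_0=1$ gives
\[
Hx+c=\sum_{i=0}^k\gamma_i q_i+\Big(1-\sum_{i=0}^k\gamma_i\delta_i\Big)c.
\]
Choosing the otherwise free coefficient $\gamma_0$ so that $\sum_{i=0}^k\gamma_i\delta_i=1$ annihilates the $c$-term and makes $x\leftrightarrow(\gamma_0,\dots,\gamma_k)$ a bijection between $\mathcal{K}_k(c,H)$ and the affine set $\{\sum_i\gamma_i\delta_i=1\}$, under which $Hx+c=\sum_{i=0}^k\gamma_i q_i$. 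Orthogonality of the $q_i$ then yields $\norm{Hx+c}_2^2=\sum_{i=0}^k\gamma_i^2 q_i^Tq_i$, which is the objective of \eqref{minreskrylov}. This gives the ``if and only if'' together with the identity $g_k^{MR}=\sum_{i=0}^k\gamma_i q_i$.

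The three cases follow by inspecting \eqref{minreskrylov}. For (a), $k<r$, all $q_i^Tq_i>0$, so the objective is strictly convex; a single Lagrange multiplier gives $\gamma_i\propto\delta_i/(q_i^Tq_i)$, and normalizing by the constraint produces \eqref{xmrk}, with $g_k^{MR}\ne0$ since a zero residual would force every $\gamma_i=0$, contradicting $\sum_i\gamma_i\delta_i=1$. For (b) and (c) the weight $q_r^Tq_r=0$ drops out of the objective. In (b), $\delta_r\ne0$, the minimum of the remaining sum is $0$, attained at $\gamma_0=\dots=\gamma_{r-1}=0$, and the constraint forces $\gamma_r=1/\delta_r$; hence $g_r^{MR}=0$ and $x_r^{MR}=y_r/\delta_r$, agreeing with Theorem~\ref{conv}. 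In (c), $\delta_r=0$, both objective and constraint involve only $\gamma_0,\dots,\gamma_{r-1}$ and coincide with the $k=r-1$ problem, so the optimal $\gamma_0,\dots,\gamma_{r-1}$ reproduce $x_{r-1}^{MR}$ while $\gamma_r$ stays free; thus $x_r^{MR}=x_{r-1}^{MR}+\gamma_r y_r$ with unchanged residual $g_{r-1}^{MR}\ne0$.

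It remains to justify the global optimality and the minimum-norm selection in (c), which I expect to be the crux. Global optimality is cheap: Theorem~\ref{conv}(b) supplies $x_r^{(1)}=(1/\delta_r^{(1)})y_r^{(1)}\in\mathcal{K}_{r-1}(c,H)$ that globally minimizes $\norm{Hx+c}_2^2$ over $\Re^n$; lying in $\mathcal{K}_{r-1}(c,H)$ it is also a minimizer there, and by the uniqueness from (a) it equals $x_{r-1}^{MR}$, so $g_{r-1}^{MR}\in\mathcal{N}(H)$ and, using $y_r\in\mathcal{N}(H)$ (from $q_r=Hy_r=0$), both $x_{r-1}^{MR}$ and $x_r^{MR}$ are global minimizers. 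The delicate point is that orthogonalizing against the \emph{single} vector $y_r$ yields the minimum-norm global solution, i.e.\ orthogonality against all of $\mathcal{N}(H)$. The key observation is that $x_{r-1}^{MR}\in\mathcal{K}_{r-1}(c,H)=\Span\{c,Hc,\dots,H^{r-2}c\}$, and for any $v\in\mathcal{N}(H)$ one has $v^TH^jc=0$ for $j\ge1$; hence the $\mathcal{N}(H)$-component of $x_{r-1}^{MR}$ is a multiple of $ZZ^Tc$, which by Theorem~\ref{conv}(b) is parallel to $y_r$. Removing the $y_r$-component therefore removes the entire $\mathcal{N}(H)$-component, and a short computation confirms that the coefficient achieving this is exactly $\gamma_r=-y_r^Tx_{r-1}^{MR}/(y_r^Ty_r)$, giving the minimum-Euclidean-norm solution.
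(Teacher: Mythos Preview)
Your argument is correct and follows essentially the same route as the paper: you reduce the minimum-residual problem to the constrained quadratic program \eqref{minreskrylov} via the representation $q_i=Hy_i+\delta_i c$ and the orthogonality of the $q_i$, and then treat the three cases by solving (or inspecting the degeneracy of) that program, invoking Theorem~\ref{conv}(b) for the global optimality and minimum-norm claims in~(c). The only cosmetic differences are that the paper parametrizes by vectors $g\in\mathcal{K}_{k+1}(c,H)$ rather than by $x\in\mathcal{K}_k(c,H)$, and for the minimum-norm step it uses the identity $Z^Ty_k=\delta_k^{(1)}Z^Tc$ (from $y_k=Hy_k^{(1)}+\delta_k^{(1)}c$) in place of your equivalent observation that $Z^TH^jc=0$ for $j\ge1$.
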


\begin{proof}
  Since $q_i$, $i=0,\dots,k$, form an orthogonal basis for
  $\mathcal{K}_{k+1}(c,H)$, an arbitrary vector in
  $\mathcal{K}_{k+1}(c,H)$ can be written as
\begin{equation}\label{eqn-rk}
  g=\sum_{i=0}^k \gamma_i q_i = \sum_{i=0}^k \gamma_i(H
  y_i+\delta_ic) 
= H\big( \sum_{i=0}^k \gamma_i y_i\big)+( \sum_{i=0}^k \gamma_i \delta_i\big)c,
\end{equation}
for some parameters $\gamma_i$, $i=0,\dots,k$. Consequently, the
condition $\sum_{i=0}^k \gamma_i \delta_i=1$ inserted into
(\ref{eqn-rk}) gives $g=H x + c$ with $x = \sum_{i=0}^k \gamma_i y_i$,
i.e., $g$ is an arbitrary vector in $\mathcal{K}_{k+1}(c,H)$ for which
the coefficient in front of $c$ equals one, and $x$ is the
corresponding arbitrary vector in $\mathcal{K}_{k}(c,H)$. Minimizing
the Euclidean norm of such a $g$ gives the quadratic program
\begin{equation}\label{minreskrylovII}
\begin{array}{ll} 
\minimize{g,\gamma_0,\dots,\gamma_k} & \disp\half g\T g \\
\subject         & g = \sum_{i=0}^k \gamma_i q_i, \\
& \sum_{i=0}^k \gamma_i \delta_i=1,
\end{array}  
\end{equation}
so that, by \eqref{eqn-rk}, the optimal values of $\gamma_i$,
$i=0,\dots,k$, give $g_k^{MR}$ as $g_k^{MR} = \sum_{i=0}^k \gamma_i
q_i$ and $x_k^{MR}$ as $x_k^{MR} = \sum_{i=0}^k \gamma_i y_i$.
Elimination of $g$ in \eqref{minreskrylovII}, taking into account the
orthogonality of the $q_i$'s, gives the equivalent problem
\eqref{minreskrylov}.  Also note that since $\delta_0\ne 0$, the
quadratic programs \eqref{minreskrylov} and \eqref{minreskrylovII} are
always feasible, and hence they are well defined.

Let $\mathcal{L}(\gamma, \lambda)$ be the Lagrangian function for
\eqref{minreskrylov}, 
\[
\mathcal{L}(\gamma, \lambda)
  =  \frac{1}{2}\sum_{i=0}^k \gamma_i^2q_i^Tq_i-\lambda\big( \sum_{i=0}^k \gamma_i \delta_i-1\big).
\]
The optimality conditions for \eqref{minreskrylov} are given by
\begin{subequations}\label{optcond}
\begin{eqnarray}
0  &= & \nabla_{\gamma_i} \mathcal{L}(\gamma, \lambda)=
\gamma_iq_i^Tq_i-\lambda \delta_i, \quad i=0,\dots,k, \label{optcond1}\\
0 & = & \nabla_{\lambda} \mathcal{L}(\gamma, \lambda)=1 - \sum_{i=0}^k
\gamma_i \delta_i. \label{optcond2}
\end{eqnarray}
\end{subequations}

\def\labelenumi{\alph{enumi})}

First, for (a), consider the case $k<r$. From \eqref{optcond1} it holds that
\begin{equation}\label{eqn-gammai}
\gamma_i=\lambda \frac{\delta_i}{q_i^Tq_i}, \quad
i= 0,\dots,k,
\end{equation}
which are well defined, since $q_i\ne 0$, $i=0,\dots,r-1$. 
The expression for $\lambda$ is obtained by inserting the
expression for $\gamma_i$, $i=0,\dots,k$, given by (\ref{eqn-gammai})
in (\ref{optcond2}) so that
\begin{equation}\label{eqn-lambda}
1=\sum_{i=0}^k \gamma_i \delta_i=
\sum_{i=0}^k \lambda \frac{\delta_i^2}{q_i^Tq_i}
\text{yielding}
\lambda=\frac{1}{\sum_{i=0}^k\frac{\delta_i^2}{q_i^Tq_i}}.
\end{equation}
Consequently, a combination of (\ref{eqn-gammai}) and
(\ref{eqn-lambda}) gives 
\begin{equation}\label{eqn-gammaiII}
\gamma_i=
\frac{1}{\sum_{j=0}^k\frac{\delta_j^2}{q_j^Tq_j}}
 \frac{\delta_i}{q_i^Tq_i}, \quad
i= 0,\dots,k.
\end{equation}
Hence, by letting $x_k^{MR}=\sum_{i=0}^k \gamma_i y_i$, with
$\gamma_i$ given by (\ref{eqn-gammaiII}), \eqref{xmrk} follows.

Now, for (b), consider the case $k=r$ with $\delta_r\ne 0$. Then,
since $q_r=0$, (\ref{optcond1}) gives $\lambda=0$ and $\gamma_i=0$,
$i=0,\dots,r-1$. Consequently, (\ref{optcond2}) gives
$\gamma_r=1/\delta_r$. Again, by letting $x_r^{MR}=\sum_{i=0}^r
\gamma_i y_i$, it holds that $x_k^{MR}=(1/\delta_r)y_r$, for which
$g_k^{MR}=Hx_k^{MR}+c=0$, so that the optimal value is zero in
(\ref{minreskrylov}) and $x_r^{MR}$ solves (\ref{lineq}).

Finally, for (c), consider the case $k=r$ with
$\delta_r=0$. Theorem~\ref{conv} shows that there exists an
$x_r^{(1)}\in\mathcal{K}_{r-1}(c, H)$ that solves
$\min_{x\in\Re^n}\norm{Hx+c}_2^2$. Consequently, since
$x_r^{(1)}\in{K}_{r-1}(c, H)$, it follows from (a) that it must hold that
$x_r^{(1)}=x_{r-1}^{MR}$ so that $x_{r-1}^{MR}$ solves
$\min_{x\in\Re^n}\norm{Hx+c}_2^2$.

  For $k=r$, the optimality conditions (\ref{optcond}) are equivalent
  to when $k=r-1$, just with the additional information that
  $\gamma_r$ is arbitrary. Hence, $x_r^{MR}=x_{r-1}^{MR} +
  \gamma_r y_r$ and $g_r^{MR}=Hx_r^{MR}+c=g_{r-1}^{MR}$ for arbitrary $\gamma_r$
  since $H y_r=0$. 

Regardless of the value of $\gamma_r$, the range-space component of
$x_r^{MR}$ is unique, since Theorem~\ref{conv} gives $H y_r=0$.
  For the remainder of the proof, let $x_r^{MR}=x_{r-1}^{MR} +
  \gamma_r y_r$ for the particular choice $\gamma_r =-(y_r^T
  x_{r-1}^{MR})/(y_r^Ty_r)$, so that $y_r^Tx_r^{MR}=0$. We will show
  that for this particular choice the null-space component of $x_r^{MR}$ is
  zero, and hence $x_r^{MR}$ is an optimal solution to
  $\min_{x\in\Re^n}\norm{Hx+c}_2^2$ of minimum Euclidean norm. 

Since
  $y_k=Hy_k^{(1)}+\delta_k^{(1)} c$, it follows that $Z^T y_k =
  \delta_k^{(1)} Z^T c$, $k=0,\dots,r$. Consequently, since
  $x_{r}^{MR}$ is formed as a linear combination of $y_k$,
  $k=0,\dots,r$, it holds that $Z\T x_{r}^{MR}$ is parallel to $Z\T
  c$. But $0=y_r^Tx_r^{MR}=(\delta_r^{(1)}Z^Tc)^T Z^Tx^{MR}=0$, so that $Z^Tx^{MR}$ is
  also orthogonal to $Z\T c$. By Theorem~\ref{conv}, $\delta_r^{(1)}
  \neq 0$ and $Z\T c\ne 0$, so
  that $Z^Tx^{MR}=0$. Hence, the null space component of $x_r^{MR}$ is
  zero. 
\end{proof}

Note that at an iteration $k$ at which $\delta_k=0$ and $q_k\ne0$,
it holds that $x_k^{MR}=x_{k-1}^{MR}$ so that the iterate is
unchanged. This is referred to as stagnation, and in accordance with Brown
\cite{brown} it holds that the unnormalized Krylov method and the
minimum-residual method form a pair, see, e.g., \cite[Proposition
6.17]{saad}. In the framework of this paper, it holds that normalization is not possible
at step $k$ in the Krylov method if and only if there is stagnation
in the minimum-residual method. Note that this cannot happen at two consecutive
iterations. If $q_k\ne0$, all information from the problem has not
been extracted even if $\delta_k=0$. Only in the case when $k=r$,
global information is obtained, and it is determined whether
\eqref{lineq} has a solution or not.

In the following corollary of Theorem~\ref{thm-minres} we state explicit recursions for the minimum-residual method.

\begin{corollary}\label{rec-minres}
Let $(q_k,y_k,\delta_k)$ be given by Lemma~\ref{rec} and let $x_k^{MR}$ be given by Theorem~\ref{thm-minres} for
$k=0,\dots,r$. If $\delta_0^{MR}=\delta_0^2$, $y_0^{MR}=\delta_0 y_0$,
\[
\delta_k^{MR}=q_k^Tq_k \sum_{i=0}^{k-1}\frac{\delta_i^2}{q_i^Tq_i} + \delta_k^2
\text{and} y_k^{MR}=q_k^Tq_k
\sum_{i=0}^{k-1}\frac{\delta_i}{q_i^Tq_i}y_i + \delta_k y_k,
\quad k=1,\dots,r,
\]
then
\[
x_{k}^{MR}=\frac{1}{\delta_{k}^{MR}}y_{k}^{MR}, \quad k=0,\dots,r-1
\text{and} k=r \mbox{\ if\ } \delta_r\ne 0.
\]
In addition, it holds that
\[
\delta_{k+1}^{MR}=\frac{q_{k+1}^Tq_{k+1}}{q_k^Tq_k}\delta_k^{MR}+\delta_{k+1}^2
\text{and}
y_{k+1}^{MR}=\frac{q_{k+1}^Tq_{k+1}}{q_k^Tq_k}y_k^{MR}+\delta_{k+1}y_{k+1},
\]
for $k=0,\dots,r-1$.
\end{corollary}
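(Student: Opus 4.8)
The plan is to derive both the closed-form identification $x_k^{MR}=y_k^{MR}/\delta_k^{MR}$ and the two recursions from a single algebraic observation: the quantities $\delta_k^{MR}$ and $y_k^{MR}$ defined in the statement are precisely $q_k^Tq_k$ times the denominator and numerator sums that already appear in the formula \eqref{xmrk} of Theorem~\ref{thm-minres}(a). First I would absorb the separated trailing terms $\delta_k^2$ and $\delta_k y_k$ back into the sums. Since $\delta_k^2=q_k^Tq_k\cdot\bigl(\delta_k^2/(q_k^Tq_k)\bigr)$ whenever $q_k\ne0$, for $1\le k\le r-1$ the definitions collapse to $\delta_k^{MR}=q_k^Tq_k\sum_{i=0}^k \delta_i^2/(q_i^Tq_i)$ and $y_k^{MR}=q_k^Tq_k\sum_{i=0}^k \bigl(\delta_i/(q_i^Tq_i)\bigr)y_i$, and the same two expressions also hold trivially at $k=0$ from $\delta_0^{MR}=\delta_0^2$ and $y_0^{MR}=\delta_0 y_0$.

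For the identification I would then simply form the ratio $y_k^{MR}/\delta_k^{MR}$ for $0\le k\le r-1$: the common factor $q_k^Tq_k$ cancels and what remains is exactly the right-hand side of \eqref{xmrk}, so $x_k^{MR}=y_k^{MR}/\delta_k^{MR}$ by Theorem~\ref{thm-minres}(a). This division is legitimate because the sum $\sum_{i=0}^k \delta_i^2/(q_i^Tq_i)$ is bounded below by $\delta_0^2/(q_0^Tq_0)>0$ (recall $\delta_0=1$ and $q_0=c\ne0$), so $\delta_k^{MR}>0$. The case $k=r$ with $\delta_r\ne0$ must be handled separately, since there $q_r=0$ and one may not divide by $q_r^Tq_r$: inserting $q_r^Tq_r=0$ directly into the defining formulas makes the sums vanish, leaving $\delta_r^{MR}=\delta_r^2$ and $y_r^{MR}=\delta_r y_r$, whence $y_r^{MR}/\delta_r^{MR}=y_r/\delta_r$, which is $x_r^{MR}$ by Theorem~\ref{thm-minres}(b).

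For the recursions I would read off $\delta_k^{MR}/(q_k^Tq_k)=\sum_{i=0}^k \delta_i^2/(q_i^Tq_i)$ and $y_k^{MR}/(q_k^Tq_k)=\sum_{i=0}^k \bigl(\delta_i/(q_i^Tq_i)\bigr)y_i$ from the collapsed closed forms, then peel off the $i=k+1$ term of the sum in the expressions for $\delta_{k+1}^{MR}$ and $y_{k+1}^{MR}$; substituting the previous closed form yields $\delta_{k+1}^{MR}=\frac{q_{k+1}^Tq_{k+1}}{q_k^Tq_k}\delta_k^{MR}+\delta_{k+1}^2$ and the analogous identity $y_{k+1}^{MR}=\frac{q_{k+1}^Tq_{k+1}}{q_k^Tq_k}y_k^{MR}+\delta_{k+1}y_{k+1}$. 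These are well defined for $k=0,\dots,r-1$ precisely because $q_k\ne0$ there (Lemma~\ref{rec}), and at $k=r-1$ they reproduce case (b), since $q_r^Tq_r=0$ forces $\delta_r^{MR}=\delta_r^2$ and $y_r^{MR}=\delta_r y_r$. The whole argument is routine algebra; the only thing to watch is the bookkeeping that keeps the terminal index $k=r$ out of every denominator, so that one never divides by $q_r^Tq_r=0$.
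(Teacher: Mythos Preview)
Your proposal is correct and follows essentially the same approach as the paper's proof: rewrite $\delta_k^{MR}$ and $y_k^{MR}$ (for $k\le r-1$) as $q_k^Tq_k$ times the denominator and numerator sums appearing in \eqref{xmrk}, invoke Theorem~\ref{thm-minres}(a) for the identification, treat $k=r$ with $\delta_r\ne0$ separately via $q_r^Tq_r=0$ and Theorem~\ref{thm-minres}(b), and then obtain the recursions by substituting the collapsed closed form for index $k$ into the defining expression for index $k+1$. Your bookkeeping on avoiding division by $q_r^Tq_r$ is slightly more explicit than the paper's, but the argument is the same.
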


\begin{proof}
For $k=0,\dots,r-1$, the expressions for $\delta_k^{MR}$ and
$y_k^{MR}$ give
\[
\frac{1}{q_k^T q_k}\delta_k^{MR}=\sum_{i=0}^{k}\frac{\delta_i^2}{q_i^Tq_i}
\text{and} \frac{1}{q_k^T q_k}y_k^{MR}=\sum_{i=0}^{k}\frac{\delta_i}{q_i^Tq_i}y_i.
\]
Note that $\delta_0\ne 0$ and $q_i\ne 0$, $i=0,\dots,k$ implies
$\delta_k^{MR}>0$ for $k<r$. Hence, Theorem~\ref{thm-minres} gives
$x_k^{MR}=(1/\delta_k^{MR})y_k^{MR}$. If $k=r$ and $\delta_r \neq 0$,
the expressions for $\delta_r^{MR}$ and $y_r^{MR}$ give
\[
\delta_r^{MR}=\delta_r^2 > 0 \text{and} y_r^{MR}=\delta_r y_r,
\]
so that Theorem~\ref{thm-minres} gives
$x_r^{MR}=(1/\delta_r^{MR})y_r^{MR}$ also for this case.

The recursions for $y_{k+1}^{MR}$ and $\delta_{k+1}^{MR}$,
$k=0,\dots,r-1$, are straightforward to obtain.
\end{proof}

The recursion for $x_r^{MR}$, based on $x_{r-1}^{MR}$ and
$(q_r,y_r,\delta_r)$, for the case $\delta_r=0$ is given in
Theorem~\ref{thm-minres} and it is not reiterated in
Corollary~\ref{rec-minres}.

Note that the expressions in Theorem~\ref{thm-minres} and
Corollary~\ref{rec-minres} for $x_k^{MR}$, $k=0,\dots, r$, are
independent of the scaling of $(q_k,y_k,\delta_k)$. Hence, if
$H\succeq 0$ and $c\in\mathcal{R}(H)$ then normalization
is well defined so that $(g_k, x_k,1)$ may be used to give $x_k^{MR}$, $k=0, \dots, r-1$, as 
convex combinations of $x_i$, $i=0,\dots, k$, respectively.

\subsection{A minimum-residual algorithm based on the unnormalized Krylov method}\label{sec-minresalg} 

To summarize we next state an algorithm for the minimum-residual method based
on Algorithm~\ref{krylovalg} and extended with the recursions in 
Corollary~\ref{rec-minres}. 

\begin{algorithm}[!htb]
\caption{A minimum-residual algorithm based on the unnormalized Krylov method}
\label{minresalg}
\begin{algorithmic}
\State Input arguments: $H$, $c$;
\State Output arguments: $x_r^{MR}$, $g_r^{MR}$, compatible; \quad
($x_r$ or $y_r$ if compatible=1 or 0;)
\State Run Algorithm \ref{krylovalg} with the extra initialization
\State $\quad y_0^{MR} \gets \delta_0 y_0$; \quad $\delta_0^{MR} \gets
\delta_0^2$; 
\quad $x_0^{MR} \gets \frac{1}{\delta_{0}^{MR}}y_{0}^{MR}$;
\quad $g_0^{MR} \gets H x_{0}^{MR} +c$;
\State For $k=1$ calculate in addition
\State $\quad y_1^{MR} \gets \frac{q_1^Tq_1}{q_0^Tq_0}y_0^{MR} + \delta_1
y_1$; \quad $\delta_1^{MR} \gets \frac{q_1^Tq_1}{q_0^Tq_0}\delta_0^{MR} +
\delta_1^2$; 
\State $\quad x_1^{MR} \gets \frac{1}{\delta_{1}^{MR}}y_{1}^{MR}$;
\quad $g_1^{MR} \gets  Hx_{1}^{MR}+c$;
\State For $k>1$ until termination calculate in addition
\State $\quad y_{k+1}^{MR} \gets 
\frac{q_{k+1}^Tq_{k+1}}{q_{k}^Tq_{k}}y_{k}^{MR} + \delta_{k+1} y_{k+1}$; \quad
$\delta_{k+1}^{MR} \gets \frac{q_{k+1}^Tq_{k+1}}{q_{k}^Tq_{k}}\delta_{k}^{MR} +
\delta_{k+1}^2$; 
\State $\quad x_{k+1}^{MR}\gets\frac{1}{\delta_{k+1}^{MR}}y_{k+1}^{MR}$;
\quad $g_{k+1}^{MR}\gets Hx_{k+1}^{MR}+c$;
\State At termination, if $\abs{\delta_r} < \delta_{tol}$, calculate in addition
\State $\quad x_{r}^{MR}\gets x_{r-1}^{MR}- \frac{y_r\T x_{r-1}^{MR}}{y_r\T y_r} y_r$; \quad compatible $\gets0$;
\end{algorithmic}
\end{algorithm}

Hence, for a compatible system \eqref{lineq} Algorithm~\ref{minresalg}
gives the same solution $x_r$ as Algorithm~\ref{krylovalg}, and in
addition it calculates $x_r^{MR}$. They are both estimates of a
solution to \eqref{lineq}. Further, if \eqref{lineq} is incompatible
then Algorithm~\ref{minresalg} delivers $x_r^{MR}$, an optimal
solution to $\min_{x\in\Re^n}\norm{Hx+c}_2^2$ of minimum Euclidean
norm, in addition to the certificate of incompatibility.

Next we observe another small example chosen to illustrates Algorithm \ref{minresalg} with
our choices for $\theta_k >0$, $q_{tol}$ and $\delta_{tol}$, on a case
when \eqref{lineq} is incompatible, i.e. $c \notin \mathcal{R}(H)$.

\begin{example}
Let
\begin{equation*}
c=\left(
\begin{array}{ccccccc} 
 3& 2& 1& 1& -1&-2 &-3
\end{array} \right)^T, \quad
H=diag \left(
\begin{array}{ccccccc} 
 5& 2& 1& 0& -1&-2 &-3
\end{array} \right),
\end{equation*}
Algorithm \ref{minresalg} applied to $H$ and $c$ with $\theta_k>0$
such that $||y_k||=||c||$, $k=1,\dots, r$, and
$q_{tol}=\delta_{tol}=\sqrt{\epsilon_M}$, yields the
following sequences
{\footnotesize   
\begin{verbatim}
q =

 3.0000  -13.1379    3.5628   -0.8597    0.1063   -0.0181    0.0017   -0.0000
 2.0000   -2.7586   -5.7676    3.9832   -1.3787    0.6372   -0.1470   -0.0000
 1.0000   -0.3793   -3.1464    0.1039    1.8638   -2.5737    1.1021    0.0000
 1.0000    0.6207   -2.8617   -1.7605    2.2573    0.5896   -1.7634   -0.0000
-1.0000   -1.6207    2.0296    2.7934   -0.6882   -2.4489   -1.4695    0.0000
-2.0000   -5.2414    1.3007    4.3735    2.1842    1.1548    0.3149   -0.0000
-3.0000  -10.8621   -3.8286   -2.6032   -0.6658   -0.2082   -0.0367    0.0000

y =

      0   -3.0000    2.4296    0.8844   -1.3331   -0.3574    1.0584   -0.0000
      0   -2.0000   -0.0222    3.7521   -2.9466   -0.2710    1.6899         0
      0   -1.0000   -0.2847    1.8644   -0.3935   -3.1633    2.8655    0.0000
      0   -1.0000   -0.5584    1.5833    0.7502   -3.7018   -0.7931    5.3852
      0    1.0000    0.8320   -1.0329   -1.5691    1.8593    3.2329    0.0000
      0    2.0000    2.2113   -0.4262   -3.3494   -1.1670    1.6060    0.0000
      0    3.0000    4.1379    2.6283   -2.0353   -0.5202    1.7756    0.0000

delta =

 1.0000    0.6207   -2.8617   -1.7605    2.2573    0.5896   -1.7634   -0.0000

xMR =
 
      0   -0.1588   -0.6633   -0.6143   -0.5995   -0.5998   -0.6000   -0.6000
      0   -0.1059   -0.0228   -0.6647   -1.0640   -1.0371   -1.0000   -1.0000
      0   -0.0529    0.0585   -0.2817   -0.2148   -0.4441   -1.0000   -1.0000
      0   -0.0529    0.1284   -0.1845    0.1376   -0.1481    0.1333   -0.0000
      0    0.0529   -0.1983    0.0407   -0.4178   -0.2588   -1.0000   -1.0000
      0    0.1059   -0.5364   -0.2994   -1.0375   -1.0794   -1.0000   -1.0000
      0    0.1588   -1.0143   -1.1600   -0.9990   -0.9938   -1.0000   -1.0000
\end{verbatim}
}

\noindent 
Hence, $r=7$ and $x_r^{MR}=\left(
\begin{array}{ccccccc} 
 -0.6& -1& -1& 0& -1&-1 &-1
\end{array} \right)^T$. Note that, since $\abs{\delta_r}< \delta_{tol}$, the system is
considered incompatible and $x_r^{MR}$ is the optimal solution to
$\min_{x\in\Re^n}\norm{Hx+c}_2^2$ of minimum Euclidean norm
and $\norm{Hx_r^{MR}+c}_2^2=1$.
\end{example}

\section{Summary and conclusion}

By making use of an unnormalized Krylov subspace framework for solving symmetric system of linear
equations, as proposed by Gutknecht \cite{Gutknecht90,
  Gutknecht92}, we show how to determine, in exact arithmetic, if
the system is compatible or incompatible. In the compatible case, a
solution is given. In the incompatible case, a
certificate of incompatibility is obtained. The basis of this framework
are the triples $(q_k,y_k,\delta_k)$, $k=0,\dots,r$, given by
Lemma~\ref{rec}, that are uniquely defined up to a scaling. 
Our results include and handle the case of a singular matrix $H$. To
the best of our knowledge this is not covered in any previous work.

We have also put the minimum-residual method in this framework and
provided explicit formulas for the iterations. Again, the
analysis is based on the triples. In the case of an incompatible system, our
analysis gives an expression for $x_r^{MR}$ of minimum Euclidean norm. The original
implementation of MINRES by Paige and Saunders, \cite{paigesaunders75}, did not deliver the
optimal solution to $\min_{x\in\Re^n}\norm{Hx+c}_2^2$ of minimum
Euclidean norm. In \cite{choipaigesaunders}, Choi, Paige and Saunders present a MINRES-like
algorithm, MINRES-QLP, that does.

One may observe that an alternative to using the minimum-residual
iterations would be to consider recursions for $y^{(1)}_k$ and
$\delta^{(1)}_k$ as given by \eqref{eqn-y1} and then calculate
$x_{r-1}^{MR}=(1/\delta^{(1)}_r) y^{(1)}_r$, according to the analysis
in the proof of Theorems~\ref{conv} and \ref{thm-minres}. However,
such an approach would not automatically yield the estimates
$x_k^{MR}$, $k=0, \dots, r-2$.

One could also note that the method of conjugate gradients may be viewed
as trying to solve the minimum-residual problem \eqref{minreskrylov}
in the situation where only the present triple $(q_k,y_k,\delta_k)$ is
allowed in the linear combination, i.e., $\gamma_i=0$,
$i=0,\dots,k-1$. This problem is then not necessarily feasible. It will
be infeasible exactly when $\delta_k=0$. One could think of methods
other than the minimum-residual method which use a linear combination
of more than one triple. It would
suffice to use two consecutive triples, since it cannot hold that
$\delta_{k-1}=0$ and $\delta_k=0$. 

Finally, we want to stress that this paper is meant to give insight
into the unnormalized Krylov subspace framework, in exact
arithmetic. In finite precision, the
unnormalized Krylov method would inherit deficiencies of any method based
on a Lanczos process such as loss of orthogonality of the generated
vectors. It is beyond the scope of the present paper to make such an
analysis, see, e.g., \cite{hanke, meurant, paige76, reid71}. The
theory of our paper is based on determining if certain quantities are
zero or not. In our algorithms, we have made choices on
optimality tolerances that are not meant to be universal. To obtain a
fully functioning algorithm, the issue of determining if a quantity is
near-zero would need to be considered more in detail. Also, we have
based our analysis on the triples, so that termination of
Algorithm~\ref{minresalg} is based on $q_k$ and $\delta_k$. In
practice, one should probably also consider $g_k^{MR}$.

Further, the use of pre-conditioning is not explored in this paper,
for this subject see, e.g., \cite{evans68, evans73, axelsson74}.

\appendix
\section{Appendix}
\subsection{A result on the Lanczos vectors}\label{comment-q}

For completeness, we review a result that characterizes the properties
of $q_k$ expressed as in \eqref{eqn-qk}, needed for the analysis.

\begin{lemma}\label{lem-qkappendix}
  Let $r$ denote the smallest positive integer $k$ for which $\mathcal{K}_{k+1}(c,H)= \mathcal{K}_k(c, H)$.
For an index $k$ such that $1\le k\le r$, let $q_k \in \mathcal{K}_{k+1}(c,
H)\cap \mathcal{K}_k(c, H)^{\perp}$, be expressed as in
\eqref{eqn-qk}. Then, the scalars $\delta_k^{(j)}$, $j=0,\dots,k$, are
uniquely determined up to a nonzero scaling. In addition, if
$\delta_k^{(k)}\ne 0$ it holds that $q_k=0$ if and only if
$\mathcal{K}_{k+1}(c,H)= \mathcal{K}_k(c, H)$, i.e.,
if $k=r$.
\end{lemma}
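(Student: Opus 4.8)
The plan is to treat the two claims of the lemma in turn, splitting the uniqueness argument according to whether $k<r$ or $k=r$. Throughout I would rely on the two structural facts recorded earlier in the paper: that the Krylov vectors $c,Hc,\dots,H^{r-1}c$ are linearly independent, so that $\dim\mathcal{K}_k(c,H)=k$ for $1\le k\le r$; and that the defining minimality of $r$ forces $\mathcal{K}_{r+1}(c,H)=\mathcal{K}_r(c,H)$, whence $\mathcal{K}_r(c,H)$ is $H$-invariant and in particular $H^rc\in\mathcal{K}_r(c,H)$. The nesting $\mathcal{K}_k(c,H)\subseteq\mathcal{K}_{k+1}(c,H)$ is immediate from the definition.

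First I would establish uniqueness up to scaling in the generic range $1\le k\le r-1$. Here $\mathcal{K}_{k+1}(c,H)\cap\mathcal{K}_k(c,H)^{\perp}$ is the orthogonal complement of $\mathcal{K}_k(c,H)$ inside $\mathcal{K}_{k+1}(c,H)$, and so has dimension $\dim\mathcal{K}_{k+1}(c,H)-\dim\mathcal{K}_k(c,H)=1$; thus $q_k$ is determined up to a scalar. Since $\{c,Hc,\dots,H^kc\}$ is then a basis of $\mathcal{K}_{k+1}(c,H)$, the numbers $\delta_k^{(j)}$ in \eqref{eqn-qk} are precisely the coordinates of $q_k$ in that basis, so they too are determined up to the same scalar. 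I would also note that in this range $q_k\ne0$ forces $\delta_k^{(k)}\ne0$: were $\delta_k^{(k)}$ to vanish, $q_k$ would lie in $\mathcal{K}_k(c,H)\cap\mathcal{K}_k(c,H)^{\perp}=\{0\}$, contradicting $q_k\ne0$.

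The case $k=r$ is where the argument degenerates, and I expect it to be the main obstacle. Now $\mathcal{K}_{r+1}(c,H)=\mathcal{K}_r(c,H)$ gives $\mathcal{K}_{r+1}(c,H)\cap\mathcal{K}_r(c,H)^{\perp}=\{0\}$, so $q_r=0$ and \eqref{eqn-qk} becomes a linear dependence $\sum_{j=0}^{r}\delta_r^{(j)}H^jc=0$. The set $\{c,Hc,\dots,H^rc\}$ is no longer independent, so uniqueness can no longer be read off from a basis; instead I would argue through the dependence relation itself. Because $c,\dots,H^{r-1}c$ are independent while $H^rc\in\mathcal{K}_r(c,H)$, the space of such relations is exactly one-dimensional, and in any nontrivial relation the coefficient $\delta_r^{(r)}$ of $H^rc$ must be nonzero — otherwise the relation would be a nontrivial dependence among the independent vectors $c,\dots,H^{r-1}c$. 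This delivers uniqueness up to scaling together with $\delta_r^{(r)}\ne0$, matching the generic case.

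Finally I would prove the stated equivalence under the hypothesis $\delta_k^{(k)}\ne0$. The implication $\mathcal{K}_{k+1}(c,H)=\mathcal{K}_k(c,H)$ (i.e.\ $k=r$) $\Rightarrow q_k=0$ is already contained in the previous paragraph. For the converse, suppose $q_k=0$ with $\delta_k^{(k)}\ne0$; then I can solve for $H^kc=-\frac{1}{\delta_k^{(k)}}\sum_{j=0}^{k-1}\delta_k^{(j)}H^jc\in\mathcal{K}_k(c,H)$, so that $\mathcal{K}_{k+1}(c,H)=\mathcal{K}_k(c,H)+\mathrm{span}\{H^kc\}=\mathcal{K}_k(c,H)$. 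By the minimality in the definition of $r$ this forces $k\ge r$, and together with the standing hypothesis $k\le r$ yields $k=r$, as required.
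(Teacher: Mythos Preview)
Your argument is correct and follows essentially the same route as the paper: you split into the cases $k<r$ and $k=r$, use linear independence of $c,\dots,H^kc$ for the first and the one-dimensional dependence relation for the second, and handle the equivalence by solving for $H^kc$. Your version is slightly more explicit in places (the dimension count for $\mathcal{K}_{k+1}\cap\mathcal{K}_k^{\perp}$, and the observation that $\delta_k^{(k)}\ne0$ is automatic for $k<r$), but the underlying ideas are identical.
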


\begin{proof}
  Assume that $q_k \in \mathcal{K}_{k+1}(c,H)\cap \mathcal{K}_k(c,
  H)^{\perp}$ is expressed as in \eqref{eqn-qk}. If $k<r$, then $c$,
  $Hc$, $H^2c$, \dots, $H^{k}c$ are linearly independent. Hence,
  $\delta_k^{(j)}$, $j=0,\dots,k$, are uniquely determined by
  $q_k$. Consequently, as $q_k$ is uniquely defined up to a nonzero
  scaling, then so are $\delta_k^{(j)}$, $j=0,\dots,k$. For $k=r$, we
  have $q_r=0$ so that
\begin{equation}\label{eqn-r}
-\delta_r^{(r)} H^r c =\sum_{j=0}^{r-1}\delta^{(j)}_rH^jc.
\end{equation}
By the definition of $r$, it holds that $c$, $Hc$, $H^2c$, \dots,
$H^{r-1}c$ are linearly independent. Hence, \eqref{eqn-r} shows that a
fixed $\delta_r^{(r)}$ uniquely determines $\delta^{(j)}_r$,
$j=1,\dots,r-1$. Consequently, a scaling of $\delta^{(r)}_r$ gives a
corresponding scaling of $\delta_r^{(j)}$,
$j=0,\dots,r-1$. Thus, $\delta_r^{(j)}$, $j=0,\dots,r$, are uniquely
determined up to a common scaling. 

Finally, assume that $\delta_k^{(k)}\ne 0$. By definition
$\mathcal{K}_{k+1}(c,H)=\mathcal{K}_k(c,H)$ implies
$q_k=0$. To show the converse, assume that $q_k=0$. Then,
\begin{equation}\label{eqn-k}
-\delta_k^{(k)} H^k c =\sum_{j=0}^{k-1}\delta^{(j)}_kH^jc.
\end{equation}
If $\delta_k^{(k)}\ne 0$, then \eqref{eqn-k} implies $H^k c\in
span\{c, Hc, H^2c, \dots, H^{k-1}c\}$, i.e.,
$\mathcal{K}_{k+1}(c,H)=\mathcal{K}_k(c, H)$, 
completing the proof.
\end{proof}

\subsection{Properties of the sequence $\{\delta_k\}$}\label{comment-delta}

In the following proposition it is shown that
the sequence $\{\delta_k\}$ can not have two zero elements in a row.

\begin{proposition}\label{deltasign}
  Let $(q_k, y_k,\delta_k)$, $k=0, \dots, r$, be given by
  Lemma \ref{rec}. If $q_k \neq 0$ and $\delta_k=0$, then
\[
\delta_{k+1}
= -\frac{\theta_k}{\theta_{k-1}} \frac{q_{k}^Tq_{k}}{q_{k-1}^T
  q_{k-1}}\delta_{k-1} \ne 0.
\]
\end{proposition}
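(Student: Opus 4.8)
The plan is to split the argument into two parts: first establish the displayed identity for $\delta_{k+1}$ by a direct substitution into the recursions of Lemma~\ref{rec}, and then argue separately that the resulting expression cannot vanish. The identity is the routine part; the genuine content is showing $\delta_{k+1}\neq 0$, which amounts to ruling out two consecutive zero entries in the sequence $\{\delta_k\}$.

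For the identity, first I would note that since $\delta_0=1\neq 0$, the hypothesis $\delta_k=0$ forces $k\geq 1$, while $q_k\neq 0$ forces $k\leq r-1$; hence $\beta_{k-1}$ and $q_{k-1}$ are well defined and $q_{k-1}\neq 0$. Substituting $\delta_k=0$ into the recursion $\delta_{k+1}=\theta_k(\alpha_k\delta_k+\beta_{k-1}\delta_{k-1})$ of Lemma~\ref{rec} immediately gives $\delta_{k+1}=\theta_k\beta_{k-1}\delta_{k-1}$. To rewrite $\beta_{k-1}$, I would invoke the relation $q_k^Tq_k=-\theta_{k-1}q_k^THq_{k-1}$ obtained from Proposition~\ref{genq} (by replacing its index $k$ with $k-1$); using the symmetry of $H$ to write $q_k^THq_{k-1}=q_{k-1}^THq_k$, this yields $\beta_{k-1}=q_{k-1}^THq_k/(q_{k-1}^Tq_{k-1})=-(1/\theta_{k-1})\,q_k^Tq_k/(q_{k-1}^Tq_{k-1})$. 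Substituting this expression into $\delta_{k+1}=\theta_k\beta_{k-1}\delta_{k-1}$ produces exactly the claimed formula.

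It then remains to show the right-hand side is nonzero. The prefactor $-(\theta_k/\theta_{k-1})\,q_k^Tq_k/(q_{k-1}^Tq_{k-1})$ is manifestly nonzero, since $\theta_k,\theta_{k-1}\neq 0$ by Lemma~\ref{rec} and $q_k,q_{k-1}\neq 0$ as noted above. Hence $\delta_{k+1}\neq 0$ if and only if $\delta_{k-1}\neq 0$, and the whole statement reduces to proving $\delta_{k-1}\neq 0$. I would argue this by contradiction using a minimality and backward-propagation argument: supposing $\delta_{k-1}=0$, the sequence $\{\delta_j\}$ has two consecutive zero entries, so let $m$ be the smallest index with $\delta_{m-1}=\delta_m=0$. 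Because $\delta_0=1\neq 0$ we must have $m\geq 2$, so the identity just proved applies at index $m-1$ (where $\delta_{m-1}=0$ and $q_{m-1}\neq 0$) and expresses $\delta_m$ as a nonzero multiple of $\delta_{m-2}$; since $\delta_m=0$, this forces $\delta_{m-2}=0$, giving the consecutive pair $\delta_{m-2}=\delta_{m-1}=0$ and contradicting the minimality of $m$. This contradiction shows $\delta_{k-1}\neq 0$ and completes the argument.

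The main obstacle I anticipate is this last step: the formula alone only relates $\delta_{k+1}$ to $\delta_{k-1}$, so the nonvanishing of $\delta_{k+1}$ is not automatic and genuinely relies on the global fact that $\{\delta_k\}$ never has two consecutive zeros, a fact traced back to the initialization $\delta_0=1$. The care required is to keep the backward propagation within the valid index range $j\geq 1$ and to ensure it terminates at $\delta_0$, where the contradiction is drawn.
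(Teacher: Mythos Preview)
Your proposal is correct and follows essentially the same route as the paper. The identity is derived identically (substitute $\delta_k=0$ in the recursion and rewrite $\beta_{k-1}$ via $q_k^Tq_k=-\theta_{k-1}q_k^THq_{k-1}$), and for the nonvanishing part both you and the paper argue by contradiction, propagating a hypothetical pair of consecutive zeros backward until it collides with $\delta_0=1$; your minimal-index formulation is just a tidier packaging of the paper's explicit descent.
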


\begin{proof}
 By Proposition \ref{genq} it holds that
\[
q_{k}^T q_{k} = -\theta_{k-1}q_{k}^TH q_{k-1}, 
\]
and, taking into account $\delta_k=0$, the expression for
$\delta_{k+1}$ from Proposition~\ref{rec} gives
\begin{equation}\label{eqn-deltaconn}
\delta_{k+1}=\theta_k \frac{q_{k-1}^TH q_k}{q_{k-1}^T
  q_{k-1}}\delta_{k-1}
= -\frac{\theta_k}{\theta_{k-1}} \frac{q_{k}^Tq_{k}}{q_{k-1}^T q_{k-1}}\delta_{k-1},
\end{equation}
giving the required expression for $\delta_{k+1}$.

It remains to show that $\delta_{k+1}\ne 0$. First, assume that $k=1$
so that $\delta_1=0$. Then, since $\theta_{1}\ne 0$, $\theta_{0}\ne 0$
and $\delta_0=1$, \eqref{eqn-deltaconn} gives $\delta_{2}\ne 0$. Now
assume that $k>1$. Assume, to get a contradiction, that
$\delta_{k+1}=0$. Then, since $\theta_{k}\ne 0$, $\theta_{k-1}\ne 0$,
\eqref{eqn-deltaconn} gives $\delta_{k-1}= 0$. We may then repeat the
same argument to obtain $\delta_{i}=0$, $i=1,\dots,k$. But this gives
a contradiction, as $\delta_1=0$ implies
$\delta_2\ne 0$. Hence, it must hold that $\delta_{k+1}\ne 0$, as
required.
\end{proof}

Based on Proposition~\ref{deltasign} the following corollary states
that if $\theta_{k-1}$ and $\theta_k$ have the same sign and
$\delta_k=0$, then $\delta_{k+1}$ and $\delta_{k-1}$ will have
opposite signs.

\begin{corollary}\label{deltasignII}
  Let $(q_k, y_k,\delta_k)$, $k=0, \dots, r$, be given by Proposition
  \ref{rec} with $\theta_{k-1}$ and $\theta_k$ of the same sign. If
  $q_k \neq 0$ and $\delta_k=0$, then $\delta_{k+1}\delta_{k-1}<0$.
\end{corollary}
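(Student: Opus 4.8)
The plan is to read off the sign of $\delta_{k+1}\delta_{k-1}$ directly from the explicit formula for $\delta_{k+1}$ supplied by Proposition~\ref{deltasign}. The hypotheses here, $q_k \neq 0$ and $\delta_k = 0$, coincide exactly with those of that proposition, so I may invoke both its identity
\[
\delta_{k+1} = -\frac{\theta_k}{\theta_{k-1}} \frac{q_k^T q_k}{q_{k-1}^T q_{k-1}} \delta_{k-1}
\]
and its conclusion that $\delta_{k+1} \neq 0$ without further work.

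First I would multiply this identity by $\delta_{k-1}$ to obtain
\[
\delta_{k+1}\delta_{k-1} = -\frac{\theta_k}{\theta_{k-1}} \frac{q_k^T q_k}{q_{k-1}^T q_{k-1}} \delta_{k-1}^2,
\]
and then determine the sign of each factor on the right. The ratio $\theta_k/\theta_{k-1}$ is strictly positive precisely because $\theta_{k-1}$ and $\theta_k$ are assumed to share a sign and are nonzero by Lemma~\ref{rec}. The term $q_k^T q_k$ is positive since $q_k \neq 0$. For the denominator, $q_k \neq 0$ forces $k \le r-1$, so that $q_{k-1} \neq 0$ as well (with $q_0 = c \neq 0$ covering the case $k=1$), giving $q_{k-1}^T q_{k-1} > 0$. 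Finally, the non-vanishing of $\delta_{k+1}$ forces $\delta_{k-1} \neq 0$ through the displayed identity, so $\delta_{k-1}^2 > 0$.

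Combining these observations, the right-hand side is minus a product of strictly positive quantities and is therefore strictly negative, which is the desired conclusion $\delta_{k+1}\delta_{k-1} < 0$. I do not anticipate any genuine obstacle: the substance is already contained in Proposition~\ref{deltasign}, and the only care required is to confirm that each denominator and each squared term is strictly positive --- that is, that $q_{k-1} \neq 0$ and $\delta_{k-1} \neq 0$ --- which follow respectively from Lemma~\ref{rec} and from the non-vanishing assertion of Proposition~\ref{deltasign}.
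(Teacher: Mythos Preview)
Your argument is correct and matches the paper's approach exactly: the corollary is stated there without a separate proof, as an immediate consequence of the identity and the non-vanishing conclusion in Proposition~\ref{deltasign}. You have simply spelled out the sign analysis that the paper leaves implicit.
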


The following lemma states an expression for the triples that is used
 in showing properties of the signs of $\delta_k$ and $\theta_k$
for the case when $H$ is positive semidefinite.

\begin{lemma}\label{lem-curv}
  Let $(q_k, y_k,\delta_k)$, $k=0, \dots, r$, be given by
  Lemma \ref{rec}. If $\delta_k \ne 0$ and $k < r$, then
\begin{equation}\label{exp}
\big(
y_{k+1}-\frac{\delta_{k+1}}{\delta_{k}}y_k\big)^TH \big(
y_{k+1}-\frac{\delta_{k+1}}{\delta_{k}}y_k\big) = 
\theta_k \frac{\delta_{k+1}}{\delta_{k}}q_{k}^Tq_{k}.
\end{equation}
\end{lemma}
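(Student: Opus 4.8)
The plan is to use the defining relation $q_i = Hy_i + \delta_i c$ from Lemma~\ref{rec} to rewrite the entire left-hand side in terms of the $q_i$, after which orthogonality does all the remaining work. Set $w = y_{k+1} - (\delta_{k+1}/\delta_k)y_k$, which is well defined precisely because $\delta_k \ne 0$.

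First I would compute $Hw$. Since $Hy_{k+1} = q_{k+1} - \delta_{k+1}c$ and $Hy_k = q_k - \delta_k c$, I obtain
\begin{equation*}
Hw = q_{k+1} - \delta_{k+1}c - \frac{\delta_{k+1}}{\delta_k}\big(q_k - \delta_k c\big) = q_{k+1} - \frac{\delta_{k+1}}{\delta_k}q_k.
\end{equation*}
The key point, and the reason the coefficient $\delta_{k+1}/\delta_k$ is exactly the right one, is that the two $\delta_{k+1}c$ contributions cancel, leaving a combination of $q_{k+1}$ and $q_k$ with no dependence on $c$.

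Next I would form $w^T H w = w^T q_{k+1} - (\delta_{k+1}/\delta_k)\,w^T q_k$ and evaluate each inner product using the orthogonality properties recorded in Lemma~\ref{rec}. Because $y_{k+1}, y_k \in \mathcal{K}_{k+1}(c,H)$ while $q_{k+1} \in \mathcal{K}_{k+1}(c,H)^{\perp}$, the first term vanishes, $w^T q_{k+1} = 0$. For the second term, $q_k \in \mathcal{K}_k(c,H)^{\perp}$ gives $q_k^T y_k = 0$ and $q_k^T y_{k-1} = 0$, and substituting the recurrence $y_{k+1} = \theta_k(-q_k + \alpha_k y_k + \beta_{k-1}y_{k-1})$ then yields $q_k^T y_{k+1} = -\theta_k q_k^T q_k$. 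Hence $w^T q_k = -\theta_k q_k^T q_k$, and combining these gives $w^T H w = \theta_k (\delta_{k+1}/\delta_k) q_k^T q_k$, which is the claimed identity.

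I do not anticipate a genuine obstacle here; the only things to be careful about are tracking which Krylov subspace each of $y_{k-1}$, $y_k$, $y_{k+1}$, $q_k$, $q_{k+1}$ lies in, so that each orthogonality claim is justified, and noting explicitly that $\delta_k \ne 0$ is exactly what legitimizes both the definition of $w$ and the cancellation of the $c$-term in the expression for $Hw$.
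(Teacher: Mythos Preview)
Your proof is correct and follows essentially the same route as the paper: both compute $Hw = q_{k+1} - (\delta_{k+1}/\delta_k)q_k$ by eliminating $c$, pair this with $w$, use the orthogonality relations $q_{k+1}\perp y_{k+1},y_k$ and $q_k\perp y_k,y_{k-1}$, and then invoke the recursion for $y_{k+1}$ to obtain $q_k^T y_{k+1} = -\theta_k q_k^T q_k$. The only cosmetic difference is that you state the orthogonality via Krylov-subspace membership whereas the paper cites it directly.
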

\begin{proof}
Eliminating $c$ from the difference of $q_{k+1}$ and $q_k$ yields
\begin{equation}\label{limdiff}
q_{k+1}-\frac{\delta_{k+1}}{\delta_{k}}q_k=H\big( y_{k+1}-\frac{\delta_{k+1}}{\delta_{k}}y_k\big).
\end{equation}
Then pre-multiplication of \eqref{limdiff} with $(
y_{k+1}-\frac{\delta_{k+1}}{\delta_{k}}y_k)^T$ yields
\begin{equation}\label{conn1}
\big(
y_{k+1}-\frac{\delta_{k+1}}{\delta_{k}}y_k\big)^T\big(q_{k+1}-\frac{\delta_{k+1}}{\delta_{k}}q_k\big) =
\big(
y_{k+1}-\frac{\delta_{k+1}}{\delta_{k}}y_k\big)^TH \big(
y_{k+1}-\frac{\delta_{k+1}}{\delta_{k}}y_k\big).
\end{equation}
Since $q_{k+1}$ is orthogonal to $y_{k+1}$ and
$y_{k}$, and since $q_{k}$ is orthogonal to $y_{k}$, \eqref{conn1} becomes
\begin{equation}\label{conn2}
-\frac{\delta_{k+1}}{\delta_{k}}y_{k+1}^T q_{k} =\big(
y_{k+1}-\frac{\delta_{k+1}}{\delta_{k}}y_k\big)^TH \big(
y_{k+1}-\frac{\delta_{k+1}}{\delta_{k}}y_k\big).
\end{equation}
Hence, by Proposition \ref{genq} and since $q_{k}$ is orthogonal to
$y_{k}$ and $y_{k-1}$, \eqref{conn2} may be written as
\[
\theta_k\frac{\delta_{k+1}}{\delta_{k}}q_{k}^T
q_{k}=\big(
y_{k+1}-\frac{\delta_{k+1}}{\delta_{k}}y_k\big)^TH \big(
y_{k+1}-\frac{\delta_{k+1}}{\delta_{k}}y_k\big),
\]
hence \eqref{exp} is obtained.
\end{proof}

The following lemma gives some results on the behavior of the sequence
of $\{\delta_k\}$ in connection to the sign of $\theta_k$ for the case when $H \succeq 0$.

\begin{lemma}\label{cor-curv}
  Let $(q_k, y_k,\delta_k)$, $k=0, \dots, r$, be given by
  Lemma \ref{rec}. Assume that $H \succeq 0$. Then $\delta_k \ne
  0$ for $k < r$. If $\delta_k > 0$ and $\delta_{k+1}\ne 0$, then
  $\delta_{k+1} > 0$ if and only if $\theta_k>0$.
\end{lemma}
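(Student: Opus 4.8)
The plan is to derive both assertions from the single quadratic identity of Lemma~\ref{lem-curv} together with the positive semidefiniteness of $H$. Throughout I will use that $q_k\neq 0$, and hence $q_k^Tq_k>0$, whenever $k<r$, and that each scaling factor $\theta_k$ is nonzero by Lemma~\ref{rec}. The one extra ingredient is the elementary fact that for $H\succeq 0$ a vanishing quadratic form forces the vector into the null space, i.e.\ $v^THv=0$ implies $Hv=0$.

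For the claim that $\delta_k\neq 0$ whenever $k<r$, I would argue by induction on $k$. The base case is immediate, since $\delta_0=1$. For the inductive step, suppose $\delta_k\neq 0$ with $k<r-1$; then $\delta_k\neq 0$ and $k<r$, so Lemma~\ref{lem-curv} applies and gives
\[
\big(y_{k+1}-\tfrac{\delta_{k+1}}{\delta_{k}}y_k\big)^TH\big(y_{k+1}-\tfrac{\delta_{k+1}}{\delta_{k}}y_k\big)=\theta_k\frac{\delta_{k+1}}{\delta_{k}}q_{k}^Tq_{k}.
\]
Assume, to get a contradiction, that $\delta_{k+1}=0$. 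Then the right-hand side is zero, and since $H\succeq 0$ the left-hand side is a vanishing quadratic form, whence $H\big(y_{k+1}-\tfrac{\delta_{k+1}}{\delta_{k}}y_k\big)=0$. By \eqref{limdiff} this expression equals $q_{k+1}-\tfrac{\delta_{k+1}}{\delta_{k}}q_k$, which reduces to $q_{k+1}$ because $\delta_{k+1}=0$; hence $q_{k+1}=0$. By Lemma~\ref{lem-qkappendix} this can only happen when $k+1=r$, contradicting $k<r-1$. Therefore $\delta_{k+1}\neq 0$, closing the induction and giving $\delta_k\neq 0$ for all $k<r$.

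For the second assertion, assume $\delta_k>0$ and $\delta_{k+1}\neq 0$. The hypothesis refers to $\delta_{k+1}$, so $k+1\le r$ and in particular $k<r$, and Lemma~\ref{lem-curv} again supplies the displayed identity. Since $H\succeq 0$, its left-hand side is nonnegative; dividing by the positive quantity $q_k^Tq_k/\delta_k$ then yields $\theta_k\delta_{k+1}\ge 0$. As $\theta_k\neq 0$ and $\delta_{k+1}\neq 0$, this inequality is in fact strict, so $\theta_k$ and $\delta_{k+1}$ have the same sign. Consequently $\delta_{k+1}>0$ if and only if $\theta_k>0$.

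The step I expect to be the main obstacle is the nonvanishing of $\delta_{k+1}$ in the first assertion. A purely sign-based reading of the identity gives only $\theta_k(\delta_{k+1}/\delta_k)q_k^Tq_k\ge 0$, which does not by itself exclude $\delta_{k+1}=0$. The decisive point is to upgrade the equality $v^THv=0$ from a statement about the quadratic form to the membership $v\in\mathcal N(H)$, and then to recognize, via \eqref{limdiff}, that this null vector is exactly $q_{k+1}$, so that its vanishing collides with $q_{k+1}\neq 0$ for $k+1<r$. Keeping the index range correct---so that Lemma~\ref{lem-curv} is applicable and $k+1=r$ is genuinely ruled out---is where the care is needed.
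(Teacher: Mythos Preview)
Your argument is correct. The second assertion is handled essentially as in the paper, via Lemma~\ref{lem-curv} and the nonnegativity of the quadratic form.

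For the first assertion, however, the paper's route is different and noticeably shorter. Rather than an induction through Lemma~\ref{lem-curv}, the paper argues directly: if $\delta_k=0$ for some $k<r$, then $q_k=Hy_k$, and since $q_k\perp y_k$ (because $y_k\in\mathcal K_k(c,H)$) one gets $y_k^THy_k=y_k^Tq_k=0$; positive semidefiniteness then forces $Hy_k=0$, i.e.\ $q_k=0$, contradicting $k<r$. This avoids the inductive scaffolding entirely, does not need Lemma~\ref{lem-curv} (which presupposes $\delta_k\ne 0$), and uses only the defining relation $q_k=Hy_k+\delta_kc$ and the basic orthogonality. Your approach, by contrast, bootstraps from $\delta_k\ne 0$ to $\delta_{k+1}\ne 0$ and needs the auxiliary identity \eqref{limdiff}; it works, but the dependence on an equation stated inside the proof of Lemma~\ref{lem-curv} is a little awkward---you might instead derive $q_{k+1}-\tfrac{\delta_{k+1}}{\delta_k}q_k=H\big(y_{k+1}-\tfrac{\delta_{k+1}}{\delta_k}y_k\big)$ directly from $q_i=Hy_i+\delta_ic$ at the point of use.
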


\begin{proof}
Assume that $\delta_k=0$ for $k < r$, then $q_{k}= Hy_k$, hence
pre-multiplication with $y_k^T$ yields
$0=y_k^T q_{k}=y_k^T Hy_k, 
$
since $q_k$ is orthogonal to $y_k$. Then, since $H \succeq 0$, it follows that $Hy_k=0$ and hence
$q_{k}=0$. Since $q_k \neq 0$ for $k < r$, the assumption yields a contradiction. Hence,
$\delta_k\neq 0$, $k <r$.

Next suppose that $\delta_k > 0$ and $\delta_{k+1}\ne 0$. Since $H\succeq 0$, Lemma~\ref{lem-curv} gives
\begin{equation}\label{ineq}
\theta_k\frac{\delta_{k+1}}{\delta_{k}}q_{k}^T
q_{k} \geq 0,
\end{equation}
which implies that $\delta_{k+1}$ and $\delta_{k}$ 
have the same sign if and only if $\theta_k >0$. Hence, if
$\delta_k> 0$, then $\delta_{k+1}>0$ if and only if $\theta_k >0$. 
\end{proof}

The relation of the signs in Lemma~\ref{cor-curv}
is a consequence of our choice of the minus-sign in \eqref{q}. Otherwise
$\delta_k$ would alternate sign in each iteration for $\theta_k >0$ and $H
\succeq 0$.

A consequence of Lemma~\ref{cor-curv} is that if $\theta_k$ is chosen positive for
$k=0,\dots,r$, then $\delta_k\le 0$ for some $k$ implies $H\not\succ
0$ and $\delta_k<0$ for some $k$ implies $H\not\succeq 0$.

\subsection{The method of conjugate gradients}\label{app-cg}
If normalization is well defined and applied to Algorithm
\ref{krylovalg}, then one obtains the method of conjugate gradients, by Hestenes and
Stiefel \cite{HestenesStiefel}. For an introduction to the method of conjugate gradients see, e.g.,
\cite{luenberger,cgwopain}. This method is usually defined for the case where
$H\succ 0$. In the method of conjugate gradients, an iterate $x_k$ is defined as the solution to
$\min_{\mathcal{K}_k(c,H)}\half x^T H x+c^T x$, and $g_k=H x_k +c$ for
$k=0,\dots,r$, i.e., $g_k\in \mathcal{K}_{k+1}(c,H)\cap
\mathcal{K}_k(c, H)^{\perp}$.

In the setting of this paper, it is equivalent to generating triples $(q_k, y_k,
\delta_k)$, $k=0,\dots,r$, given by Lemma \ref{rec}, selecting
the scaling $\theta_k$ in \eqref{normcond} such that $\delta_k=1$, for
all $k$. With the additional assumption
$H \succeq 0$, Lemma \ref{cor-curv} gives $\delta_k \ne 0$, $k=0,
\dots, r-1$. If $c \in \mathcal{R}(H)$, i.e., \eqref{lineq} is
compatible, then Theorem~\ref{conv} ensures that also
$\delta_r\ne0$. Further, if  $H
\succeq 0$ and $c \not\in \mathcal{R}(H)$, normalization will be
well defined in all except the very last iteration.

For completeness, in the following proposition we show that when the
normalization condition is satisfied, $\theta_k$ is exactly the
step-length along the search-direction $p_k$ in iteration $k$, so that
the usual line-search description of the method of conjugate
gradients, see, e.g., \cite{saad}, follows.

\begin{proposition}\label{cgstep}
Assume that $H \succeq 0$ and $c \in \mathcal{R}(H)$. If
  $(q_k,y_k,\delta_k)$, $k=0,\dots,r$, are given by Lemma~\ref{rec},
  for the choice of $\theta_k$ in \eqref{normcond}, then $\delta_k=1$,
  $k=1, \dots, r$, and $\theta_k>0$, $k=0, \dots, r-1$. Hence, denoting $(q_k, y_k,
\delta_k)$ by $(g_k, x_k, 1)$, for 
\begin{eqnarray*}
  p_0 & = & -g_0, \quad p_k=-g_k+\frac{g_k^Tg_k}{g_{k-1}^Tg_{k-1}} p_{k-1}, \quad
  k=1,\dots,r-1. 
\end{eqnarray*}
it holds that
\begin{equation*}
x_{k+1} = x_k + \theta_k p_k, \quad k=0,\dots,r-1, 
\end{equation*}
\begin{equation*}
g_{k+1} = g_k + \theta_k H p_k, \quad k=0,\dots,r-1,
\end{equation*}
and further,
\begin{eqnarray*}
  \theta_k & = & -\frac{g_k^T p_k}{p_k^THp_k}, \quad k=0,\dots,r-1.
\end{eqnarray*}
\end{proposition}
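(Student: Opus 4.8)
The plan is to handle the two scalar claims ($\delta_k=1$ and $\theta_k>0$) first and then build the conjugate-gradient recurrences on top of them. For the first, I would substitute the normalization choice \eqref{normcond} into the $\delta$-recursion of Lemma~\ref{rec}: starting from $\delta_0=1$, the relation $\delta_{k+1}=\theta_k(\alpha_k\delta_k+\beta_{k-1}\delta_{k-1})$ together with $\theta_k=1/(\alpha_k+\beta_{k-1})$ gives $\delta_{k+1}=1$ by a one-line induction. To know that \eqref{normcond} is actually well defined (i.e. $\alpha_k+\beta_{k-1}\ne0$) and that each $\theta_k>0$, I would invoke Lemma~\ref{cor-curv}, whose hypothesis $H\succeq0$ holds: it gives $\delta_k\ne0$ for $k<r$, and since $\delta_k=\delta_{k+1}=1>0$ it yields $\theta_k>0$; the case $k=r$ is covered because $c\in\mathcal{R}(H)$ forces $\delta_r\ne0$ by Theorem~\ref{conv}.

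With $\theta_k>0$ in hand, I would \emph{define} $p_k:=\frac{1}{\theta_k}(x_{k+1}-x_k)$, so the iterate recursion $x_{k+1}=x_k+\theta_kp_k$ holds by construction; the gradient recursion $g_{k+1}=g_k+\theta_kHp_k$ is then immediate, since $g_j=Hx_j+c$ gives $g_{k+1}-g_k=H(x_{k+1}-x_k)=\theta_kHp_k$. The substantive step is to show that this $p_k$ obeys the stated two-term recurrence. I would take the three-term recurrence $x_{k+1}=\theta_k(-g_k+\alpha_kx_k+\beta_{k-1}x_{k-1})$ from Lemma~\ref{rec}, eliminate $x_{k-1}$ using $x_{k-1}=x_k-\theta_{k-1}p_{k-1}$, and collect terms. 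The coefficient multiplying $x_k$ is $\theta_k(\alpha_k+\beta_{k-1})-1$, which vanishes \emph{exactly} because of the normalization choice \eqref{normcond}; this leaves $p_k=-g_k-\theta_{k-1}\beta_{k-1}p_{k-1}$. Finally I would rewrite the scalar $-\theta_{k-1}\beta_{k-1}$ as $g_k^Tg_k/(g_{k-1}^Tg_{k-1})$ using the identity $q_k^Tq_k=-\theta_{k-1}q_k^THq_{k-1}$ of Proposition~\ref{genq} together with the symmetry of $H$ and the definition of $\beta_{k-1}$ in \eqref{alphabeta}; the base case $p_0=-g_0$ follows from $x_0=y_0=0$.

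For the step-length formula I would exploit orthogonality of the Lanczos vectors. Since $p_{k-1}\in\mathcal{K}_k(c,H)$ and $g_k=q_k\perp\mathcal{K}_k(c,H)$, taking the inner product of the direction recurrence with $g_k$ gives $g_k^Tp_k=-g_k^Tg_k$. For the denominator I would use $Hp_k=\frac{1}{\theta_k}(g_{k+1}-g_k)$, note that $p_k\in\mathcal{K}_{k+1}(c,H)$ while $g_{k+1}=q_{k+1}\perp\mathcal{K}_{k+1}(c,H)$, so that $p_k^THp_k=\frac{1}{\theta_k}(-p_k^Tg_k)=\frac{1}{\theta_k}g_k^Tg_k$. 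Dividing yields $-g_k^Tp_k/(p_k^THp_k)=\theta_k$; here $g_k^Tg_k\ne0$ since $q_k\ne0$ for $k<r$.

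The main obstacle I anticipate is the cancellation of the $x_k$ term in the derivation of the direction recurrence: this is precisely the place where the particular normalization scaling $\theta_k=1/(\alpha_k+\beta_{k-1})$ is essential, and getting the bookkeeping of the three-term-to-two-term reduction right (including tracking which Krylov subspace each of $p_{k-1}$, $p_k$, $g_k$, $g_{k+1}$ lives in for the orthogonality arguments) is where care is needed; the remaining manipulations are routine.
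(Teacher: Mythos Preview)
Your proposal is correct and follows essentially the same route as the paper: you define $p_k=(1/\theta_k)(x_{k+1}-x_k)$, reduce the three-term $x$-recursion of Lemma~\ref{rec} to the two-term direction recurrence using the cancellation $\theta_k(\alpha_k+\beta_{k-1})=1$, rewrite $-\theta_{k-1}\beta_{k-1}$ via the identity $q_k^Tq_k=-\theta_{k-1}q_k^THq_{k-1}$ from Proposition~\ref{genq}, and obtain the step-length formula from $g_{k+1}^Tp_k=0$. The only differences are cosmetic: you make the cancellation of the $x_k$-coefficient explicit and compute $g_k^Tp_k$ and $p_k^THp_k$ separately, whereas the paper absorbs the cancellation into the rewritten $x$-recursion $x_{k+1}=x_k+\theta_k\bigl(-g_k-\beta_{k-1}(x_k-x_{k-1})\bigr)$ and reads off $\theta_k$ directly from $0=g_k^Tp_k+\theta_kp_k^THp_k$.
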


\begin{proof}
Let $(q_0, y_0, \delta_0)=(c, 0, 1)$, then with $\theta_k$ as in
\eqref{normcond}, i.e.,
\begin{equation*}
\theta_0  =  \frac{1}{\alpha_0}, \quad \theta_k    = 
\frac{1}{\alpha_k+\beta_{k-1}},
\quad k=1,\dots,r-1,
\end{equation*}
where $\alpha_k$, $k=0, \dots, r-1$, and $\beta_{k-1}$, $k=1, \dots,
r-1$, are given by \eqref{alphabeta}, the recursions of
Lemma~\ref{rec} yield $\delta_k=1$, $k=0, \dots, r-1$. Hence, by
Lemma~\ref{cor-curv}, $\theta_k>0$, $k=0, \dots, r-1$. Denoting $(q_k,
y_k, \delta_k)$ by $(g_k, x_k, 1)$, the recursions for $x_k$ and $g_k$
of Lemma~\ref{rec} are then given by
\begin{eqnarray*}
x_{1}  &=&  x_0 + \theta_0 (-g_0), \\
x_{k+1} & = & x_k + \theta_k \big(-g_k -
\beta_{k-1}(x_k-x_{k-1})\big), \quad
k=1,\dots,r-1, 
\end{eqnarray*}
and
\begin{eqnarray*}
g_1  &  = &Hx_1+c = g_0+\theta_0(-Hg_0), \\
g_{k+1} & =  &Hx_{k+1}+c = g_k+\theta_k\big(
-Hg_k-\beta_{k-1}(g_k-g_{k-1})\big), \  k=1,\dots,r-1.
\end{eqnarray*}
For $p_k = (1/\theta_k) (x_{k+1}-x_{k})$,
  $k=0,\dots,r-1$, the above recursions give
\[
p_0=-g_0, \text{and} p_k=-g_k-
\beta_{k-1}(x_k-x_{k-1}), \quad k=1,
\dots, r-1.
\]
By Proposition \ref{genq} it holds that $g_k^Tg_k=-\theta_{k-1} g_{k}^THg_{k-1}$,
and therefore,
$$
\beta_{k-1}=\frac{g_{k-1}^TH g_k}{g_{k-1}^T g_{k-1}}=-\frac{1}{\theta_{k-1}}\frac{g_{k}^Tg_k}{g_{k-1}^T
  g_{k-1}},
$$ 
hence
\[
p_k=-g_k-
\beta_{k-1}\theta_{k-1} p_{k-1}=-g_k+
\frac{g_{k}^Tg_k}{g_{k-1}^T
  g_{k-1}}p_{k-1}, \quad k=1,
\dots, r-1.
\]
Consequently, since $g_{k+1}-g_k=H(x_{k+1}-x_k)=\theta_k H p_k$,
$k=0,\dots,r-1$, it holds that $g_{k+1} = g_k + \theta_k H p_k$,
$k=0,\dots,r-1$. Further, $g_{k+1}^T p_k=0$ since $g_{k+1}\in
\mathcal{K}_{k+1}(c,H)\cap \mathcal{K}_k(c, H)^{\perp}$ and
$p_k\in\mathcal{K}_k(c,H)$.
Hence, $0=g_{k+1}^T p_k=g_k^Tp_k + \theta_k p_k^TH p_k$
yields
$$
\theta_k=-\frac{g_k^Tp_k}{p_k^THp_k}, \quad k=0,\dots,r-1,
$$
completing the proof.
\end{proof}

\section*{Acknowledgement}

This research was partially supported by the Swedish Research Council (VR).

\bibliography{references}{}
\bibliographystyle{siam}

\end{document}